\documentclass[10pt,a4paper]{article}

\usepackage[utf8x]{inputenc}
\usepackage{amsfonts}
\usepackage{amsmath, amsthm, amssymb}
\usepackage{verbatim}
\usepackage[stable]{footmisc}
\usepackage[hmargin=2.5cm,vmargin=3cm]{geometry}
\newcommand{\R}{\mathbb{R}}
\newcommand{\J}{\mathcal{J}}
\newcommand{\I}{\mathcal{I}}
\newcommand{\A}{\mathcal{A}}

\newtheorem{thm}{Theorem}[section]
\newtheorem{Lemma}[thm]{Lemma}

\theoremstyle{definition}
\newtheorem{Remark}[thm]{Remark}

\theoremstyle{definition}
\newtheorem{Definition}[thm]{Definition}

\numberwithin{equation}{section}

\usepackage{hyperref}

\title{Stability of  Order Preserving Transforms}
\author{Dan Florentin and Alexander Segal\footnote{This research was partially supported by the Israel Science Foundation...}}
\begin{document}

\date{}
\maketitle

\begin{abstract}
The purpose of this paper is to show stability of order preserving/reversing transforms on the class of non-negative convex functions in $\R^n$, and its subclass, the class of non-negative convex functions attaining $0$ at the origin (these are called "geometric convex functions"). We show that transforms that satisfy conditions which are weaker than order preserving transforms, are essentially close to the order preserving transforms on the mentioned structures.
\end{abstract}

\section{Introduction}
The concept of duality was studied by Artstein-Avidan and Milman in recent papers \cite{AM2010, AM2010Short, AM2009} on dif{}ferent classes which arise from geometric problems. Examples of such classes are the class of convex bodies containing zero, the class of all lower semi-continuous convex functions on $\R^n$, which we denote by $Cvx(\R^n)$, and its subclass - the class of all lower semi-continuous \textit{geometric} convex functions denoted by $Cvx_0(\R^n)$. A convex function $f$ is said to be geometric if it is non-negative and $f(0)=0$.

It turned out that duality on such classes is uniquely defined by simple properties like \textit{order reversion} and \textit{involution} (actually involution is not required and can be replaced by bijectivity). The Legendre transform is an example of such a duality transform that acts on the class of convex functions $Cvx(\R^n)$. When dealing with $Cvx(\R^n)$, it was shown by Artstein-Avidan and Milman \cite{AM2009} that the Legendre transform is essentially the only order reversing transform acting on this class, where "essentially" means up to the choice of scalar product and addition of linear terms.

{\em Note:} The properties of order preservation and involution actually imply preservation of supremum and infimum on the classes. It is also known
that the mentioned classes can be generated with supremum (or infimum) of an extremal family. This concept is not new, and was used by Kutateladze and Rubinov \cite{KutRub}
to discuss Minkowski duality on complete lattices.

Studying the structure of $Cvx_0(\R^n)$ shows that it dif{}fers from $Cvx(\R^n)$. As was shown by Artstein-Avidan and Milman in \cite{AM2010}, there exist essentially two duality (order reversing) bijective transforms - The Legendre transform, and a "geometric duality" transform called $\A$, on the class of geometric convex functions.

Actually, the authors of \cite{AM2010} showed first that there exist essentially two order \textit{preserving} bijections - identity transform $\I$ and the Gauge transform $\J$ which greatly dif{}fers from $\I$. After showing this, using the fact that $\mathcal{L}$ is an involution and the fact that $\J=\mathcal{L}\A = \A\mathcal{L}$, it is easy to see that the order reversing transforms are also uniquely defined. Notice that the results about order reversing transforms are "dual" to the results about order preserving transforms. For details of the mentioned transforms we refer the reader to \cite{AM2010}, and provide the basic definitions for completeness.

\begin{Definition}
The geometric transform $\A:Cvx_0(\R^n)\rightarrow Cvx_0(\R^n)$ is defined as follows:
\[
(\A f)(x) = \begin{cases} \sup_{\{y\in \R^n: f(y) > 0\}} \frac{<x,y>-1}{f(y)} & \text{ if } x  \in \{ y: f(y)=0\}^{\circ} \\ +\infty & \text{ if } x  \not \in \{ y: f(y)=0\}^{\circ} \end{cases}
\]
assuming $\sup \emptyset = 0$.
\end{Definition}
\begin{Definition}
The Legendre transform $\mathcal{L}$ of a function $f$ is defined as follows:
\[
(\mathcal{L} f)(x) = \sup_y(<x,y>-f(y)),
\]
\end{Definition}
and the Gauge transform $\J$ is defined as $\J f=\mathcal{AL}f=\mathcal{LA}f$, for $f \in Cvx_0(\R^n)$. Notice that the commutativity of $\A$ and $\mathcal{L}$ requires a proof, and is actually a non-trivial fact. The Gauge transform $\J$ can be calculated, and written explicitly:
\[
(\J f)(y)=\inf{ \{  1/f(x): y=tx/f(x), 0 \leq t \leq 1\}},
\]
where $\inf\emptyset = +\infty$, and $0/f(0)$ is understood in the sense of limits. \\
In this paper we discuss the stability of the mentioned transforms on the class $Cvx_0(\R^n)$ and $Cvx_{+}(\R^n)$ (non-negative convex functions). We do not deal with classes of convex bodies, and refer the reader to \cite{AMStab} for results on such classes. We start with the following definitions:
\begin{Definition}
Let $\tilde{C} > 1$, and $\tilde{c} = \tilde{C}^{-1}$. A bijective transform $T$, on the class $Cvx_0(\R^n)$ that satisfies the following conditions:
\begin{enumerate}
\item[\ref{TDefinition}a.]{$f \leq g$ implies $Tf \leq \tilde{C}Tg$}
\item[\ref{TDefinition}b.]{$f \leq \tilde{c}g$ implies $Tf \leq Tg$,}
\label{TDefinition}
\end{enumerate}
will be called a $\tilde{C}$-\textit{almost order preserving} transformation, or just almost order preserving, in case there exists some unspecified constant that satisfies conditions (\ref{TDefinition}a) and (\ref{TDefinition}b).
\end{Definition}
Similarly, we can define a $\tilde{C}$-\textit{almost order reversing} transform:
\begin{Definition}
Let $\tilde{C} > 1$, and $\tilde{c} = \tilde{C}^{-1}$. A bijective transform $T$, on the class $Cvx_0(\R^n)$ that satisfies the following conditions:
\begin{enumerate}
\item[\ref{TRevDefinition}a.]{$f \leq g$ implies $Tf > \tilde{c}Tg$}
\item[\ref{TRevDefinition}b.]{$f \leq \tilde{c}g$ implies $Tf > Tg$.}
\end{enumerate}
will be called a $\tilde{C}$-\textit{almost order reversing} transformation, or just almost order reversing, in case there exists some constant that satisfies conditions (a) and (b).
\label{TRevDefinition}
\end{Definition}
\begin{Remark}
If $T$ and $T^{-1}$ are almost order preserving transforms, the following holds:
\begin{enumerate}
\item[\ref{TRevEquiv}a.]{$Tf \leq Tg$ implies $f \leq \tilde{C}g$.\label{stab2}  }
\item[\ref{TRevEquiv}b.]{$Tf \leq \tilde{c}Tg$ implies $f \leq g$.} \label{stab4}
\end{enumerate}
\label{TRevEquiv}
\end{Remark}
Indeed, since $T$ is bijective, we can write $f = Tf'$ and $g = Tg'$. If property (\ref{TDefinition}a) holds for $f$, $g$ and $T^{-1}$, after substituting $Tf'$ and $Tg'$, we come to $Tf' \leq Tg' \Rightarrow f' \leq \tilde{C}g'$. So condition (\ref{TDefinition}a) on $T^{-1}$  is equivalent to condition (\ref{TRevEquiv}a) on $T$. The same applies for (\ref{TRevEquiv}b) and (\ref{TDefinition}b).  \\ \\
Notice that when $\tilde{C}=1$, we have order preserving transform. We would like to show that order preserving transforms are stable, i.e. almost order preserving transforms  are, in some sense, close to the order preserving transforms discussed above. Our main theorems are the following:

\begin{thm}
Let $n \geq 2$. Any $1-1$ and onto transform $T:Cvx_0(\R^n) \rightarrow Cvx_0(\R^n)$ such that both, $T$ and $T^{-1}$ are $\tilde{C}$-almost order preserving, satisfies one of the following conditions: \\
Either
\begin{enumerate}
\item[\ref{MainTheorem}a.]for all $f \in Cvx_0(\R^n), \quad cf\circ B \leq Tf  \leq Cf \circ B$,
\end{enumerate}
or
\begin{enumerate}
\item[\ref{MainTheorem}b.] for all $f \in Cvx_0(\R^n), \quad c(\J f)\circ B \leq  Tf  \leq C(\J f)\circ B$, \\
\end{enumerate}
where $B \in GL(n)$ and $c,C$ are positive constants depending only on $\tilde{C}$.
\label{MainTheorem}
\end{thm}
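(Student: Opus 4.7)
The plan is to mimic the proof of the corresponding exact rigidity theorem of Artstein-Avidan and Milman \cite{AM2010}, replacing every equality by a two-sided estimate whose constants depend only on $\tilde C$. Recall that $Cvx_0(\R^n)$ admits two distinguished families of extremal functions, and every element of $Cvx_0(\R^n)$ is a pointwise supremum of members of either family. The first consists of the indicator-type functions $\chi^\infty_K$, equal to $0$ on a closed convex set $K\ni 0$ and $+\infty$ elsewhere. The second consists of "point-mass" functions $h_{v,\alpha}$, defined as the minimal element of $Cvx_0(\R^n)$ taking value $\geq\alpha$ at $v$. Crucially, the identity $\I$ fixes each family, whereas the Gauge transform $\J$ swaps them, and the theorem must detect which case we are in.

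First I would show that $T$ maps each extremal to a function trapped, up to a $\tilde C$-dependent constant, between two extremals of the same type. The key invariant is scaling behaviour: $\lambda\chi^\infty_K=\chi^\infty_K$ for every $\lambda>0$, while $\lambda h_{v,\alpha}=h_{v,\lambda\alpha}$. Using hypotheses (\ref{TDefinition}a), (\ref{TDefinition}b) together with their counterparts (\ref{TRevEquiv}a), (\ref{TRevEquiv}b) applied to $T^{-1}$, the comparison of $T(\lambda f)$ with $Tf$ detects the scaling type of the image. This forces a dichotomy: either $T$ sends type-(I) functions to approximate type-(I) and type-(II) to approximate type-(II) (leading to case (\ref{MainTheorem}a) of the theorem), or it swaps the two families (leading to case (\ref{MainTheorem}b)).

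In the first case, restricting $T$ to the indicator-type family yields an almost order preserving bijection on the class of convex bodies containing the origin, which by the companion stability result for convex bodies in \cite{AMStab} is essentially the action $K\mapsto BK$ of some $B\in GL(n)$. Compatibility with the action on the point-mass family forces the corresponding map on vectors to coincide, up to $\tilde C$-error, with the same $B$. For a general $f\in Cvx_0(\R^n)$, writing $f=\sup_v h_{v,f(v)}$ and combining the approximate sup-commutation that follows from almost order preservation with the two-sided bounds already obtained on $Th_{v,\alpha}$, one concludes $c f\circ B\leq Tf\leq C f\circ B$. The second case is treated symmetrically, by interchanging the roles of the two extremal families (which is precisely the effect of $\J$), and yields $c(\J f)\circ B\leq Tf\leq C(\J f)\circ B$.

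The principal obstacle will be the dichotomy step. In the exact rigidity proof of \cite{AM2010} the classification follows immediately from strict order preservation and a clean characterisation of extremals, but here one must carry out the same classification using only the weakened bounds of Definition \ref{TDefinition} and Remark \ref{TRevEquiv}, while ensuring that all constants accumulated through the argument depend solely on $\tilde C$ and not on $n$ or on the particular extremal chosen. A secondary subtlety is that almost order preserving maps do not preserve suprema exactly, so the passage from extremals to an arbitrary $f$ requires an approximate sup-commutation lemma, extracted carefully from the definitions so as to avoid any dimension-dependent loss.
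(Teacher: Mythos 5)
Your overall architecture --- identify extremal families, prove a dichotomy, transfer to a rigidity result for the induced map on bodies, and reassemble a general $f$ from extremals via approximate sup/inf-commutation --- is the same as the paper's, but two of your steps conceal genuine gaps.

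First, the mechanism you propose for showing that $T$ sends an extremal function to something trapped between two extremals of the same type does not work. You want to read off the ``scaling type'' of $Tf$ by comparing $T(\lambda f)$ with $Tf$; but the hypotheses relate $T(\lambda f)$ to $Tf$ only through order (e.g.\ $Tf\leq\tilde{C}\,T(\lambda f)$ when $\lambda\geq 1$) and give no access whatsoever to $\lambda\cdot Tf$, which is what the scaling type of the image is about. In particular $\lambda\chi_K^\infty=\chi_K^\infty$ yields only the tautology $T(\lambda\chi_K^\infty)=T(\chi_K^\infty)$ and says nothing about the shape of the image. What actually transfers under an almost order isomorphism is an \emph{order-theoretic} characterisation of extremality: the paper's property $\tilde P$ (Definition \ref{def-PropertyPTilde}), a relaxed sup-irreducibility, which Lemma \ref{PToTildeP} shows is inherited by $Tf$ whenever $f$ has property $P$, and whose satisfiers are then classified as indicators or ``almost linear'' functions (Lemma \ref{TildeP_Bounds}). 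You need a substitute for this; scaling alone will not produce it. Relatedly, since the analysis is carried out ray by ray (Lemma \ref{Raywiseness}), one must also show that the identity-versus-$\J$ alternative is the \emph{same} on every ray, which the paper does by a separate convexity-of-support argument; your global formulation skips this point.

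Second, the phrase ``compatibility with the action on the point-mass family forces the corresponding map on vectors to coincide, up to $\tilde C$-error, with the same $B$'' hides the quantitative core of the proof. Even once indicators are known to go (approximately) to indicators and $T$ induces a linear map $B$ on supports, you must still determine how the \emph{value} parameter transforms: a priori $Th_{v,\alpha}$ could be comparable to $h_{Bv,\psi(v,\alpha)}$ for an essentially arbitrary almost-monotone $\psi$. The paper pins down $\psi(v,\alpha)\asymp\beta\alpha$ by deriving an approximate multiplicative Cauchy equation for the slope and length maps, solving it with the Hyers--Ulam theorem (Theorem \ref{Hyers_Ulam}), using the interplay between the two maps to force the exponent to equal $\pm1$, and then invoking Lemma \ref{Triangle_Height_Function_Lemma} for the multidimensional height function. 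Nothing in your outline produces this linearity in $\alpha$, and without it the bounds $cf\circ B\leq Tf\leq Cf\circ B$ cannot be assembled. A smaller remark: in the identity case you do not need a stability theorem for convex bodies from \cite{AMStab}; since indicators take only the values $0$ and $+\infty$, the multiplicative slack $\tilde C$ is invisible on them, so the induced map on $\mathcal{K}_0^n$ is \emph{exactly} order preserving and the exact rigidity result of \cite{AM5} applies --- this is both simpler and avoids propagating an extra approximation through the remainder of the argument.
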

\begin{Remark}
Actually the proof gives $C \leq \lambda\tilde{C}^7$, but it is entirely possible that the dependence on $\tilde{C}$ is linear.
\end{Remark}
 The "dual" of the above statement follows:
\begin{thm}
Let $n\geq 2$. Any bijective transform $T:Cvx_0(\R^n) \rightarrow Cvx_0(\R^n)$ such that both, $T$ and $T^{-1}$ are almost order reversing, satisfies one of the following conditioins: \\
Either
\begin{enumerate}
\item[\ref{MainTheorem_Dual}a.]{For all $f \in Cvx_0(\R^n), \quad c(\mathcal{A}f)\circ B \leq Tf  \leq C(\mathcal{A}f) \circ B$ }
\end{enumerate}
or
\begin{enumerate}
\item[\ref{MainTheorem_Dual}b.]{For all $f \in Cvx_0(\R^n), \quad c(\mathcal{L}f)\circ B \leq Tf  \leq C(\mathcal{L}f) \circ B $,}
\end{enumerate}
\label{MainTheorem_Dual}
where $B \in GL(n)$ and $c,C$ are positive constants as above.
\end{thm}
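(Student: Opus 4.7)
The plan is to reduce Theorem \ref{MainTheorem_Dual} to the already-stated Theorem \ref{MainTheorem} by precomposing $T$ with the geometric duality $\mathcal{A}$. Set $S = T \circ \mathcal{A}:Cvx_0(\R^n) \to Cvx_0(\R^n)$. Since $\mathcal{A}$ is a bijective involution that reverses order, composing it with an almost order reversing $T$ should yield an almost order preserving $S$; then Theorem \ref{MainTheorem} classifies $S$, and undoing the composition via $\mathcal{A}^2 = \mathcal{I}$ produces the two expected alternatives for $T$.

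The key ingredient is the scaling identity $\mathcal{A}(\lambda f) = \lambda^{-1} \mathcal{A}f$ for $\lambda > 0$, which is immediate from the definition of $\mathcal{A}$ as a supremum of the linear-in-$f^{-1}$ expression $(\langle x,y\rangle-1)/f(y)$. Using this, I verify the two conditions for $S$. If $f \leq g$, then $\mathcal{A}f \geq \mathcal{A}g$, so condition (\ref{TRevDefinition}a) for $T$ gives $T\mathcal{A}g > \tilde{c}\, T\mathcal{A}f$, i.e.\ $Sf \leq \tilde{C}Sg$. If $f \leq \tilde{c}g$, then $\mathcal{A}f \geq \mathcal{A}(\tilde{c}g) = \tilde{C}\mathcal{A}g$, equivalently $\mathcal{A}g \leq \tilde{c}\,\mathcal{A}f$, so condition (\ref{TRevDefinition}b) for $T$ gives $T\mathcal{A}g > T\mathcal{A}f$, i.e.\ $Sf \leq Sg$. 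Since $T^{-1}$ is also almost order reversing and $S^{-1} = \mathcal{A} \circ T^{-1}$, the symmetric argument (applying $\mathcal{A}$ to the inequalities produced by $T^{-1}$ and using the scaling identity in the form $\mathcal{A}(\tilde{c}h) = \tilde{C}\mathcal{A}h$) shows that $S^{-1}$ is $\tilde{C}$-almost order preserving as well.

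Now Theorem \ref{MainTheorem} applies to $S$, yielding one of the alternatives $cf\circ B \leq Sf \leq Cf\circ B$ or $c(\mathcal{J}f)\circ B \leq Sf \leq C(\mathcal{J}f)\circ B$ for all $f \in Cvx_0(\R^n)$. Since $\mathcal{A}$ is a bijection, I may substitute $\mathcal{A}f$ for $f$, and using $\mathcal{A}^2 = \mathcal{I}$ obtain $S\mathcal{A}f = T\mathcal{A}^2 f = Tf$. The two cases then read $c(\mathcal{A}f)\circ B \leq Tf \leq C(\mathcal{A}f)\circ B$ and $c(\mathcal{J}\mathcal{A}f)\circ B \leq Tf \leq C(\mathcal{J}\mathcal{A}f)\circ B$. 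The first is exactly case (\ref{MainTheorem_Dual}a). For the second, I use the identity $\mathcal{J} = \mathcal{L}\mathcal{A}$ recalled in the introduction to compute $\mathcal{J}\mathcal{A} = \mathcal{L}\mathcal{A}^2 = \mathcal{L}$, recovering case (\ref{MainTheorem_Dual}b).

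There is no substantial obstacle here once one accepts the properties of $\mathcal{A}$ cited from \cite{AM2010}: bijectivity on $Cvx_0(\R^n)$, involutivity, order reversal, the scaling identity $\mathcal{A}(\lambda f)=\lambda^{-1}\mathcal{A}f$, and the factorization $\mathcal{J}=\mathcal{L}\mathcal{A}=\mathcal{A}\mathcal{L}$. The constants $c,C$ are inherited unchanged from Theorem \ref{MainTheorem} (same $\tilde{C}$), and the linear map $B \in GL(n)$ is the same one produced there. The only point demanding a little care is consistency of the pointwise inequality convention when infinite values occur (the $+\infty$ branch in the definition of $\mathcal{A}f$ forces equality of the relevant effective domains), but this is a routine bookkeeping matter rather than a genuine difficulty.
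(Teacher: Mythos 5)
Your proposal is correct and follows essentially the same route as the paper, which reduces to Theorem \ref{MainTheorem} by composing with $\mathcal{A}$ (the paper uses $\mathcal{A}\circ T$ where you use $T\circ\mathcal{A}$, an immaterial difference) and then unwinds via $\mathcal{A}^2=\mathcal{I}$ and $\mathcal{J}=\mathcal{L}\mathcal{A}=\mathcal{A}\mathcal{L}$. Your write-up is in fact more detailed than the paper's one-line argument, since you explicitly verify the almost-order-preserving conditions for $S$ and $S^{-1}$ using the homogeneity $\mathcal{A}(\lambda f)=\lambda^{-1}\mathcal{A}f$.
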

In the case of general positive convex functions ($Cvx_{+}(\R^n)$), we have a similar theorem:
\begin{thm}
Let $n \geq 2$. Any bijective transform $T:Cvx_{+}(\R^n) \rightarrow Cvx_{+}(\R^n)$ such that both, $T$ and $T^{-1}$ are almost order preserving, must be close to the identity transform:
\[
cf(Bx + b_0) \leq (Tf)(x)  \leq Cf(Bx + b_0) \\
\]
where $B \in GL(n), b_0 \in \R^n$ and $c,C$ are positive constants.
\label{GeneralCvxStability}
\end{thm}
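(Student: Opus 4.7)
The plan is to reduce Theorem \ref{GeneralCvxStability} to Theorem \ref{MainTheorem} by identifying a canonical shift $b_0 \in \R^n$ and a natural copy of $Cvx_0(\R^n)$ inside $Cvx_+(\R^n)$ that $T$ respects. First, I show $T$ fixes the constant function $\mathbf{0}$: since $\mathbf{0} \leq f$ for every $f \in Cvx_+$, the hypothesis gives $T\mathbf{0} \leq \tilde{C}\, Tf$; surjectivity then yields $T\mathbf{0} \leq \tilde{C}\, g$ for every $g \in Cvx_+$, and taking $g \equiv 0$ forces $T\mathbf{0} \equiv 0$.

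For each $b \in \R^n$ introduce the subclass $\mathcal{C}^b = \{f \in Cvx_+ : f(b) = 0\}$, which is in bijection with $Cvx_0(\R^n)$ via the translation $(\tau_b g)(x) = g(x-b)$; in particular $\mathcal{C}^0 = Cvx_0(\R^n)$. The key structural step is to produce $b_0 \in \R^n$ with $T(\mathcal{C}^0) \subseteq \mathcal{C}^{b_0}$. My candidate for $b_0$ is extracted from $T\psi_0$, where $\psi_0$ is the sharp function given by $\psi_0(0)=0$ and $\psi_0(x)=+\infty$ for $x\neq 0$. Because $\mathcal{C}^0 = \{f : f \leq \psi_0\}$ is the principal downset of $\psi_0$, almost-order-preservation yields $Tf \leq \tilde{C}\, T\psi_0$ for every $f \in \mathcal{C}^0$; a symmetric argument applied to $T^{-1}$ forces $T\psi_0$ to be essentially concentrated at a single point, which I call $b_0$, pinning every $Tf$ with $f \in \mathcal{C}^0$ into $\mathcal{C}^{b_0}$.

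With this reduction, $\tilde{T} := \tau_{b_0}^{-1} \circ T \circ \tau_0$ is an almost-order-preserving bijection on $Cvx_0(\R^n)$, and Theorem \ref{MainTheorem} offers two alternatives: sandwich between constants times $g \circ B$, or between constants times $(\J g) \circ B$. The gauge branch must be ruled out because repeating the identification of $b_0$ from an arbitrary base point $b \neq 0$ should produce the affine law $b \mapsto b + b_0$, whereas $\J$ is anchored at the origin and does not commute with this translation action. A concrete way to obtain the contradiction is to test $T$ on functions whose zero set lies far from $b_0$: the gauge description would predict $Tf$ to take infinite values on a non-trivial region, clashing with $Tf \in Cvx_+$. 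This leaves the identity branch as the only possibility.

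To extend the estimate from $\mathcal{C}^0$ to all of $Cvx_+(\R^n)$, I repeat the slice argument for every $\mathcal{C}^b$: the map $b \mapsto T\psi_b$ singles out the point $b + b_0$ (after applying $B$), and Theorem \ref{MainTheorem} applied slicewise yields the bound $cf(Bx+b_0) \leq Tf(x) \leq Cf(Bx+b_0)$ first on each slice. A general $f \in Cvx_+$ is then written as the supremum of its minorants in the various slices, and almost-order-preservation transports this supremum through $T$, producing the desired inequality everywhere. The main obstacle I expect is the second step: without strict order preservation $T\psi_0$ is only \emph{essentially} concentrated, so extracting a single point $b_0$ from this near-concentration, and verifying that the resulting assignment $b \mapsto T\psi_b$ is affine up to the linear part $B$, is where the quantitative heart of the argument lies.
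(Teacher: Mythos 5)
Your reduction-to-Theorem-\ref{MainTheorem} strategy is genuinely different from the paper's proof (which never invokes the $Cvx_0$ result: it works directly with the delta functions $D_\theta+c$, shows they are mapped bijectively to delta functions, fibre to fibre, proves the base map is affine via interval preservation and the fibre map is almost linear via Lemma \ref{Triangle_Height_Function_Lemma}, and reconstructs general $f$ as $\hat{\inf}_y(D_y+f(y))$). Your route could in principle work, but as written it has a genuine gap at its central step. Membership in the slice $\mathcal{C}^{b_0}=\{g: g(b_0)=0\}$ is an \emph{exact} condition, and almost-order-preservation only ever yields multiplicative information. From $f\leq \psi_0$ you get $Tf\leq \tilde{C}\,T\psi_0$; even granting that $T\psi_0$ is a delta function $D_{b_0}+c'$ (which itself needs the paper's bijectivity argument, not just "a symmetric argument"), this only gives $Tf(b_0)\leq \tilde{C}c'$, not $Tf(b_0)=0$. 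So "essentially concentrated" does not pin $T(\mathcal{C}^0)$ into $\mathcal{C}^{b_0}$; you need $c'=0$ exactly, i.e.\ $T(1^{\infty}_{\{0\}})=1^{\infty}_{\{b_0\}}$ on the nose, and proving that already requires the fibre analysis (surjectivity of $T$ on the fibre over $b_0$ forces $\inf_c\psi(b_0,c)=0$, whence $\psi(b_0,0)=0$). You would additionally need $T(\mathcal{C}^0)=\mathcal{C}^{b_0}$ (onto), since Theorem \ref{MainTheorem} is stated for bijections of $Cvx_0(\R^n)$; this is not addressed.

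Two further points. Your exclusion of the gauge branch is justified by a false claim: functions in $Cvx_+(\R^n)$ do take the value $+\infty$ (the paper's extremal family $D_\theta+c$ consists of such functions), so "$Tf$ takes infinite values" is not a contradiction. The branch can be excluded, but by a different mechanism: the slices overlap (e.g.\ $1^{\infty}_{[0,b]}\in\mathcal{C}^0\cap\mathcal{C}^{b}$), and a $\J$-type description anchored at $b_0$ is incompatible, even up to constants, with one anchored at the image of $b$, since the two candidate images vanish at different single points. Finally, the gluing step is where the real content lies and you have only flagged it: the slices do not cover $Cvx_+(\R^n)$ (constants, or $D_\theta+c$ with $c>0$, lie in no $\mathcal{C}^b$), the matrices $B$ produced slice by slice must be shown to coincide, and the affinity of $b\mapsto b_0(b)$ is exactly the paper's argument that $\phi(\theta)=A\theta+b$ preserves intervals. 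In short, the skeleton is plausible but each of the three load-bearing steps (exact slice preservation, branch exclusion, cross-slice consistency) is either missing or argued incorrectly, and filling them in essentially reproduces the paper's delta-function analysis.
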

Notice that there there is no dual statement for the class of general convex non-negative functions, since there exist no order reversing transformations on this class, as was noted by Artstein-Avidan and Milman in \cite{AM2010}.

\section{Preliminaries and Notations}
Let us state that throughout the article, all the constants $c, C, c', C'$ etc, mostly depend on $\tilde{C}$ which appears in the definition of order almost preserving transforms. The dependence is some power of $\tilde{C}$ which can be seen during the proofs. These constants are not universal and might have a dif{}ferent meaning in dif{}ferent context. \\ \\
We will use the notation of {\em convex indicator} functions, $1_K^{\infty}$ where $K$ is some convex domain. As our discussion is limited to convex functions, we define it in the following way:
\[
1_K^{\infty}(x)= \begin{cases} 0, & x  \in K \\ +\infty, & x  \not \in K \end{cases}
\]
 Likewise, we will use {\em modified Delta} functions denoted by $D_\theta + c$, which equals $c$ when $x=\theta$ and $+\infty$ otherwise.
\\ \\
Next we state a known stability result by Hyers and Ulam \cite{Hyers, Ulam}, which we will use in some of the proofs:
\begin{thm}
Let $E_1$ be a normed vector space, $E_2$ a Banach space and suppose that the mapping $f:E_1\rightarrow E_2$ satisfies the inequality
\[
||f(x+y)-f(x)-f(y)|| \leq \epsilon
\]
for all $x, y \in E_1$, where $\epsilon > 0$ is a constant. Then the limit
\[
g(x) = \lim_{n\rightarrow\infty}2^{-n}f(2^nx)
\]
exists for each $x \in E_1$, and $g$ is the unique additive mapping satisfying
\[
||f(x)-g(x)|| \leq \epsilon
\]
for all $x \in E_1$. If $f$ is continuous at a single point of $E_1$, then $g$ is continuous everywhere.
\label{Hyers_Ulam}
\end{thm}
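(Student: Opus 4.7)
The plan is to follow Hyers' original dyadic-limit scheme. I would define the candidate additive map as $g(x) := \lim_{n\to\infty} 2^{-n} f(2^n x)$ and verify, in order: (i) the limit exists pointwise, (ii) it lies within $\epsilon$ of $f$, (iii) it is additive, (iv) it is the unique additive map with property (ii), and (v) it inherits continuity from $f$.

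For steps (i) and (ii), set $f_n(x) := 2^{-n} f(2^n x)$. Specializing the hypothesis to $y = x$ gives $\|f(2x) - 2f(x)\| \leq \epsilon$, and applying this at $2^n x$ produces, after division by $2^{n+1}$, the telescoping bound $\|f_{n+1}(x) - f_n(x)\| \leq \epsilon/2^{n+1}$. Completeness of $E_2$ then makes the series $\sum_{n\ge 0}(f_{n+1}(x) - f_n(x))$ convergent at every $x$, defining $g$. Summing the geometric series from $n = 0$ yields $\|g(x) - f(x)\| \leq \epsilon$.

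For (iii), evaluating the hypothesis at $(2^n x, 2^n y)$ and dividing by $2^n$ gives $\|f_n(x+y) - f_n(x) - f_n(y)\| \leq \epsilon/2^n$; letting $n \to \infty$ shows $g$ is additive. For (iv), if $h : E_1 \to E_2$ is any other additive map with $\|f - h\| \leq \epsilon$ pointwise, then $\|g - h\| \leq 2\epsilon$ pointwise, and the $\mathbb{Q}$-homogeneity that follows from additivity gives $\|g(x) - h(x)\| = \tfrac{1}{n}\|g(nx) - h(nx)\| \leq 2\epsilon/n$ for every positive integer $n$, forcing $g = h$.

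For (v), suppose $f$ is continuous at some $x_0$. Then $f$ is bounded on a ball $B(x_0, r)$, and so is $g$ (being within $\epsilon$ of $f$). Additivity gives $g(x_0 + h) - g(x_0) = g(h)$, so $g$ is bounded by some $M$ on $B(0, r)$; $\mathbb{Q}$-homogeneity then yields $\|g(h)\| = \tfrac{1}{n}\|g(nh)\| \leq M/n$ whenever $\|h\| < r/n$, which proves continuity at $0$, and additivity promotes this to continuity everywhere. I expect this last step to be the main obstacle, because the pointwise estimate $\|f - g\| \leq \epsilon$ does not directly transfer any modulus of continuity from $f$ to $g$; one is forced to exploit boundedness of $g$ on a set with nonempty interior together with rational homogeneity of additive maps.
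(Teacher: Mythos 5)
Your proof is correct, but note that the paper does not actually prove this statement: Theorem \ref{Hyers_Ulam} is quoted as a known result with references to Hyers and Ulam, so there is no in-paper argument to compare against. What you have written is precisely Hyers' classical direct-method proof --- the telescoping bound $\|f_{n+1}(x)-f_n(x)\|\leq \epsilon/2^{n+1}$, completeness of $E_2$, the $\epsilon/2^n$ estimate for additivity, uniqueness via $\mathbb{Q}$-homogeneity, and continuity from local boundedness --- and every step checks out, including the last one, where you correctly observe that one must pass through boundedness of $g$ near the origin rather than trying to transfer a modulus of continuity from $f$ directly.
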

\begin{Lemma}
Assume we have function $f:\R^+ \rightarrow \R^+$, which satisfies the following condition of $C-$monotonicity:
\begin{equation}
x \leq y \text{ implies } f(x) \leq Cf(y) \label{Cmonotonicity}
\end{equation}
for a constant $C > 1$ independent of $x$ and $y$. Then there exist a monotonic function $g(x)$ such that $C^{-1}g(x) \leq f(x) \leq g(x)$.
\label{almost_monotonic_lemma}
\end{Lemma}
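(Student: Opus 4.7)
The natural candidate for $g$ is the running supremum
\[
g(x) := \sup_{0 < y \leq x} f(y).
\]
My plan is to verify that this $g$ meets both requirements. Monotonicity is immediate from the definition, since enlarging $x$ only enlarges the set over which the supremum is taken. The upper inequality $f(x) \leq g(x)$ is also immediate, as $y = x$ is admissible in the supremum.

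For the lower inequality $C^{-1} g(x) \leq f(x)$, I would use the $C$-monotonicity assumption directly: for every $y \leq x$ we have $f(y) \leq C f(x)$, so taking the supremum over $y \leq x$ on the left gives $g(x) \leq C f(x)$, which rearranges to the claim. Finiteness of $g(x)$ is automatic from this same bound, since $f$ takes values in $\mathbb{R}^+$.

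There is essentially no obstacle here; the lemma is a one-line consequence of the definition of $g$ as a running supremum together with the hypothesis. The only subtlety worth flagging is that we take the supremum over $y \leq x$ (not a strict inequality) so that $y=x$ witnesses $f(x) \leq g(x)$, and that the hypothesis is used uniformly in $y$ to bound $g(x)$ by $C f(x)$.
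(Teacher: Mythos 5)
Your proof is correct and is essentially identical to the paper's: both define $g$ as the running supremum $g(x)=\sup_{y\leq x}f(y)$, observe monotonicity and $f\leq g$ immediately, and obtain $g(x)\leq Cf(x)$ by applying the $C$-monotonicity hypothesis uniformly before taking the supremum. No further comment is needed.
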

\begin{proof} Define $g(x)$ to be the infimum over all monotone functions which are greater or equal $f(x)$:
\[
g(x)=\sup_{0 \leq y \leq x}f(y).
\]
Obviously $g(x) \geq f(x)$ and $g(x)$ is monotone. For any $x_0$, we know that if $y \leq x_0$ then $f(y) \leq Cf(x_0)$. Therefore this is true after applying $\sup$, which brings us to $g(x_0) \leq Cf(x_0)$ as desired.
\end{proof}
\begin{Lemma}
Assume we have a function $f:\R^n \times \R^+ \rightarrow \R^+$ which satisfies the following inequalities for all $(x,a)$ and $(y,b)$:
\begin{equation}
\frac{1}{C}(\lambda f(x,a) + (1-\lambda)f(y,b)) \leq f(\lambda (x,a) + (1-\lambda)(y,b)) \leq C(\lambda f(x,a) + (1-\lambda)f(y,b)),
\end{equation}
and $f(x,0) = 0$, for all $x \in \R^n$. Then there exists a constant $C'$ such that
\[
\frac{1}{C'} a \leq f(x,a) \leq C' a
\]
\label{Triangle_Height_Function_Lemma}
\end{Lemma}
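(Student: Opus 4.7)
The plan is to decouple the dependence on the two variables: first show that for each fixed $x$, $f(x,a)$ is comparable to $a \cdot f(x,1)$; then show that $x \mapsto f(x,1)$ is pinched between two positive constants independent of $x$.

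For the first step, substitute $(y,b) = (x,0)$ into the hypothesis. Since $f(x,0) = 0$, the inequality collapses to
\[
\frac{\lambda}{C} f(x,a) \leq f(x, \lambda a) \leq C\lambda f(x,a)
\]
for all $\lambda \in [0,1]$. Setting $a=1$ gives the comparability for $\lambda \leq 1$, and the case $\lambda > 1$ follows by applying the same inequality with $a = \lambda$ and $\lambda$ replaced by $1/\lambda$, yielding
\[
\frac{a}{C} f(x,1) \leq f(x,a) \leq C a\, f(x,1) \qquad (a > 0).
\]

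For the second step, apply the hypothesis to the points $(x,1)$ and $(y,0)$ with parameter $\lambda \in (0,1)$. Using $f(y,0) = 0$ and writing $z = \lambda x + (1-\lambda) y$, this becomes
\[
\frac{\lambda}{C} f(x,1) \leq f(z, \lambda) \leq C\lambda f(x,1).
\]
Sandwiching against the first-step bound $\frac{\lambda}{C} f(z,1) \leq f(z,\lambda) \leq C\lambda f(z,1)$ (the first step applied at $z$) and cross-multiplying yields
\[
\frac{1}{C^2} f(x,1) \leq f(z,1) \leq C^2 f(x,1).
\]
Every $z \in \R^n$ can be written as $\frac{1}{2} x + \frac{1}{2}(2z-x)$, so this holds for all pairs $x, z \in \R^n$; hence $f(\cdot, 1)$ is constant up to a factor of $C^2$.

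Combining the two steps and setting $m := f(0,1)$ (which is positive unless $f \equiv 0$, a degenerate case incompatible with the stated conclusion) gives $\frac{m}{C^3}\, a \leq f(x,a) \leq C^3 m\, a$, proving the lemma with $C' = C^3 \max(m, 1/m)$. The one non-routine ingredient is the sandwich in the second step, where one estimates $f(z, \lambda)$ along two different approaches: via the hypothesis at the pair $(x,1), (y,0)$, and via the internal rescaling $(z,1), (z,0)$ provided by the first step. Everything else is direct substitution into the almost-linearity hypothesis.
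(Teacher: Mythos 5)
Your proof is correct and follows essentially the same route as the paper's: fiberwise almost-linearity in the second variable is obtained by pairing $(x,a)$ with $(x,0)$, and the transfer between different fibers exploits $f(\cdot,0)=0$ via a midpoint decomposition (the paper does this in one shot through $(x,a)=\tfrac12(2x-e_n,0)+\tfrac12(e_n,2a)$, you through $(z,\lambda)=\lambda(x,1)+(1-\lambda)(y,0)$ sandwiched against the fiberwise bound at $z$). Your explicit observation that the final constant must involve $m=f(0,1)$ and that $f\equiv 0$ is a degenerate exception to the statement is a point the paper glosses over, but it is not a defect of your argument.
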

\begin{proof} First we check the case where $n=0$. Substitute $y=0$ and use the fact that $f(0)=0$ to conclude:
\begin{equation}
c\lambda f(a) \leq f(\lambda a) \leq C\lambda f(a).
\label{temp3}
\end{equation}
This is true for every $0 \leq\lambda < 1$ and $a \in \R$, so choose $a=1$ to get almost-linearity of $f$:
\begin{equation}
c\lambda \leq f(\lambda) \leq C\lambda.
\end{equation}
Note that this is true for $\lambda \leq 1$, but we can easily conclude it for all $\lambda$ by taking $a'=\lambda a$ and applying (\ref{temp3}) again. First, rewrite (\ref{temp3}) in the form
\[
\frac{1}{C\lambda} \leq \frac{f(a)}{f(\lambda a)} \leq \frac{1}{c \lambda}.
\]
After substituting $a'=\lambda a$, it becomes:
\[
\frac{1}{C\lambda} \leq \frac{f(a'/\lambda)}{f(a')} \leq \frac{1}{c \lambda}.
\]
Now, if $a'=1$, we get what we required:
\[
\frac{c}{\lambda} \leq f(1/\lambda) \leq \frac{C}{\lambda}.
\]
Replace $1/\lambda$ with $t>1$ and conclude the proof.
To prove the general case, notice that if $x=(x_1, x_2, \ldots x_n)$, then using the previous case
\begin{eqnarray*}
f(x,a) &=& f(\frac{1}{2}(2x_1, 2x_2, \ldots 2x_n-1, 0) + \frac{1}{2}(0, 0, \ldots, 0, 1, 2a))\\
 &\leq& \frac{1}{2}Cf(0,0, \ldots, 1, 2a) \leq C'a
\end{eqnarray*}
In the same way, we see that $f(x,a) \geq \frac{1}{C'}a$. This completes the proof.
\end{proof}

\section{Stability On the Class of Geometric Convex Functions}

\subsection{Preservation of $\sup$ and $\hat{\inf}$}
Since we work with convex functions, taking supremum results in a convex function in our class. However, infimum of convex functions is not necessarily convex, thus we use a modified infimum denoted by $\hat{\inf}$, defined as follows:
\[
\hat{\inf_\alpha}(f_\alpha) = \sup_g (g \in Cvx_0(\R^n) : g \leq f_\alpha \text { for each } \alpha).
\]
Now we can see how almost order preserving transforms act on $\sup$ and $\hat{\inf}$.
\begin{Lemma}If $T$ is almost order preserving transformation, then:
\begin{equation}
\tilde{c}^2T(\max f_\alpha) \leq  \max Tf_\alpha \leq \tilde{C}T(\max f_\alpha) \label{sup_stab}
\end{equation}
\begin{equation}
\tilde{c}T(\hat{\inf}{f_\alpha}) \leq \hat{\inf{Tf_\alpha}} \leq \tilde{C}^2T(\hat{\inf}{f_\alpha}) \label{inf_stab}
\end{equation}
\label{InfSupLemma}
\end{Lemma}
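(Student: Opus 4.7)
The plan is to split (\ref{sup_stab}) and (\ref{inf_stab}) into four one-sided bounds and, within each, distinguish the ``easy'' direction (a single use of (\ref{TDefinition}a)) from the ``hard'' direction (which passes through $T^{-1}$ via Remark~\ref{TRevEquiv} and so tacitly uses that $T^{-1}$ is also almost order preserving, which I read as an implicit running assumption here).

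The easy upper bound $\max Tf_\alpha \leq \tilde{C}\, T(\max f_\alpha)$ follows by applying (\ref{TDefinition}a) to $f_\beta \leq \max_\alpha f_\alpha$ and taking $\sup_\beta$. Symmetrically, the easy lower bound $\tilde{c}\, T(\hat{\inf}\, f_\alpha) \leq \hat{\inf}\, Tf_\alpha$ follows by applying (\ref{TDefinition}a) to $\hat{\inf}\, f_\alpha \leq f_\beta$ to obtain $\tilde{c}\, T(\hat{\inf}\, f_\alpha) \leq Tf_\beta$ for every $\beta$, noting that the left side is itself in $Cvx_0(\R^n)$, and then invoking the defining property of $\hat{\inf}$ as the largest such minorant.

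The two harder inequalities share a single trick: exploit the sharp form $Tf \leq \tilde{c}\, Tg \Rightarrow f \leq g$ of Remark~\ref{TRevEquiv}b---which loses no constant---by manufacturing the $\tilde{c}$ factor on the right through a rescaling of the argument of $T^{-1}$. For the sup, set $G = \max_\alpha Tf_\alpha$ and $\psi = T^{-1}(\tilde{C} G)$; then $Tf_\alpha \leq G = \tilde{c}(\tilde{C} G) = \tilde{c}\, T\psi$ for each $\alpha$, so Remark~\ref{TRevEquiv}b gives $f_\alpha \leq \psi$, hence $\max f_\alpha \leq \psi$, and (\ref{TDefinition}a) yields $T(\max f_\alpha) \leq \tilde{C}\, T\psi = \tilde{C}^2 G$, i.e.\ $\tilde{c}^2\, T(\max f_\alpha) \leq \max Tf_\alpha$. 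For the inf, mirror this with $g = \hat{\inf}\, Tf_\alpha$ and $\phi = T^{-1}(\tilde{c}\, g)$: from $g \leq Tf_\alpha$ one has $T\phi = \tilde{c}\, g \leq \tilde{c}\, Tf_\alpha$, so Remark~\ref{TRevEquiv}b gives $\phi \leq f_\alpha$ for every $\alpha$, hence $\phi \leq \hat{\inf}\, f_\alpha$, and (\ref{TDefinition}a) produces $\tilde{c}\, g = T\phi \leq \tilde{C}\, T(\hat{\inf}\, f_\alpha)$, which rearranges to $\hat{\inf}\, Tf_\alpha \leq \tilde{C}^2\, T(\hat{\inf}\, f_\alpha)$.

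I do not foresee a substantive obstacle; the entire argument is axiomatic once the rescaling is spotted. The only point worth flagging is that the asymmetry between the constants in the two directions ($\tilde{C}$ vs.\ $\tilde{C}^2$, and $\tilde{c}^2$ vs.\ $\tilde{c}$) is structural---the hard direction chains Remark~\ref{TRevEquiv}b with (\ref{TDefinition}a) and so pays one extra factor of $\tilde{C}$, whereas the easy direction pays only one.
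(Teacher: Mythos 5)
Your argument is correct and is essentially the paper's own proof: the paper likewise proves the easy bound $\max Tf_\alpha \leq \tilde{C}T(\max f_\alpha)$ directly from (\ref{TDefinition}a), and obtains the hard bound by writing $\max Tf_\alpha = \tilde{c}Th$ for $h = T^{-1}(\tilde{C}\max Tf_\alpha)$ (your $\psi$) and chaining Remark~\ref{TRevEquiv}b with (\ref{TDefinition}a), declaring the $\hat{\inf}$ case similar. Your observation that the lemma tacitly assumes $T^{-1}$ is also almost order preserving (needed to invoke Remark~\ref{TRevEquiv}) matches the paper's actual usage.
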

\begin{proof} Since $T$ is bijective, we may assume that there exists such a function $h$ such that: $\max{Tf_\alpha} = \tilde{c}Th.$
Hence, for each $\alpha$, $Tf_\alpha \leq \tilde{c}Th$. Condition (\ref{TRevEquiv}b) implies that $f_\alpha \leq h$ for all $\alpha$. If we define
$h' = \max{f_\alpha}$, we may write $h' \leq h$. Applying condition (\ref{TDefinition}a) we conclude that $Th' \leq \tilde{C}Th$, which means (by definition of $Th$) that $\tilde{c}^2T(\max{f_\alpha}) \leq \max{Tf_\alpha}$. To get the right hand side, we write that $f_\alpha \leq h'$, so by condition (\ref{TDefinition}a) we get that $Tf_\alpha \leq \tilde{C}Th'$. This is true for all $\alpha$, so $\max{Tf_\alpha} \leq \tilde{C}Th'$. The proof of inequality (\ref{inf_stab}) is similar.
\end{proof}

\subsection{Preservation of Zero and Infinity}
\begin{Lemma} \label{lem-ZeroP}
If $T$ is almost order preserving transformation, then $T1_{\{0\}}^{\infty} = 1_{\{0\}}^{\infty}$ and $T0 = 0$.
\end{Lemma}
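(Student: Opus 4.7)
The plan is to exploit the fact that $1_{\{0\}}^\infty$ is the maximal element of $Cvx_0(\R^n)$ and the constant zero function is the minimal element, and then invoke the approximate preservation of $\sup$ and $\hat{\inf}$ already established in Lemma \ref{InfSupLemma}. This reduces the whole lemma to evaluating two easy inequalities pointwise.

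First I would take the index family $\{f_\alpha\}_\alpha$ to be a list of \emph{all} elements of $Cvx_0(\R^n)$. Since every geometric convex function satisfies $f \leq 1_{\{0\}}^\infty$ and $1_{\{0\}}^\infty \in Cvx_0(\R^n)$, we have $\max_\alpha f_\alpha = 1_{\{0\}}^\infty$. By bijectivity of $T$, the image family $\{Tf_\alpha\}_\alpha$ is again all of $Cvx_0(\R^n)$, hence $\max_\alpha Tf_\alpha = 1_{\{0\}}^\infty$ as well. Inequality (\ref{sup_stab}) of Lemma \ref{InfSupLemma} then yields
\[
\tilde{c}^{\,2}\, T(1_{\{0\}}^\infty) \;\leq\; 1_{\{0\}}^\infty \;\leq\; \tilde{C}\, T(1_{\{0\}}^\infty).
\]
Evaluating at $x=0$, the left-hand inequality combined with non-negativity of $T(1_{\{0\}}^\infty)$ forces $T(1_{\{0\}}^\infty)(0) = 0$; at any $x \neq 0$, the right-hand inequality forces $T(1_{\{0\}}^\infty)(x) = +\infty$. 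Therefore $T(1_{\{0\}}^\infty) = 1_{\{0\}}^\infty$.

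The argument for $T0 = 0$ is symmetric, using (\ref{inf_stab}) in place of (\ref{sup_stab}). The zero function is the minimum of $Cvx_0(\R^n)$ (every element is non-negative and $0 \in Cvx_0(\R^n)$), so for the same tautological family $\hat{\inf}_\alpha f_\alpha = 0$, and surjectivity gives $\hat{\inf}_\alpha Tf_\alpha = 0$ as well. Inequality (\ref{inf_stab}) then yields
\[
\tilde{c}\, T(0) \;\leq\; 0 \;\leq\; \tilde{C}^{\,2}\, T(0),
\]
and the left-hand inequality together with $T(0) \geq 0$ gives $T(0) \equiv 0$.

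No real obstacle arises: the only ingredients are the identification of $1_{\{0\}}^\infty$ and $0$ as the two extremal elements of the class, and the use of bijectivity to see that applying $T$ to the family of all geometric convex functions produces the same family back; after that, Lemma \ref{InfSupLemma} does all the work.
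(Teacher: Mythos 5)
Your proof is correct, but it takes a different route from the paper's. You invoke Lemma \ref{InfSupLemma} with the tautological family of all elements of $Cvx_0(\R^n)$ and then read off the conclusion pointwise from the two-sided bounds $\tilde{c}^{\,2}T(1_{\{0\}}^\infty)\leq 1_{\{0\}}^\infty\leq \tilde{C}T(1_{\{0\}}^\infty)$ (and similarly for $0$); this works because a function taking only the values $0$ and $+\infty$ is unaffected by multiplication by a finite positive constant, and because Lemma \ref{InfSupLemma} appears before this lemma in the paper, so there is no circularity. The paper instead argues directly from condition (\ref{TDefinition}b): since $\tilde{c}\,1_{\{0\}}^\infty=1_{\{0\}}^\infty$, every $f$ satisfies $f\leq \tilde{c}\,1_{\{0\}}^\infty$, hence $Tf\leq T1_{\{0\}}^\infty$ for all $f$, and surjectivity forces $T1_{\{0\}}^\infty$ to be the maximal element of the class. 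The paper's argument is lighter (it needs only one of the two defining conditions and no auxiliary lemma), and it isolates the scale-invariance of $\{0,+\infty\}$-valued functions as the essential trick; your argument buys a uniform template that handles the maximal and minimal elements symmetrically through the already-established $\sup$/$\hat{\inf}$ machinery, at the cost of carrying the extra constants $\tilde{c}^{\,2}$ and $\tilde{C}^{\,2}$, which you then correctly discard in the pointwise evaluation. A minor remark: for the value at the origin you do not even need the left-hand inequality, since $T(1_{\{0\}}^\infty)$ lies in $Cvx_0(\R^n)$ and therefore vanishes at $0$ by definition of the class.
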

\begin{proof} Since $1_{\{0\}}^{\infty}$ is the maximal function on the set $Cvx_0(\R^n)$, we may write: $f \leq \tilde{c}1_{\{0\}}^{\infty}$. Using condition (\ref{TDefinition}b), we get $Tf \leq T1_{\{0\}}^{\infty}$, for every $f$. Since $T$ is bijective, $T1_{\{0\}}^{\infty}$ must be the maximal function. In the same way $T0 = 0$.
\end{proof}
\subsection{Ray-wise-ness}
\begin{Lemma}
If $T:Cvx_0(\R^n) \rightarrow Cvx_0(\R^n)$ is an almost order preserving transformation then there exists some bijection $\Phi:S^{n-1} \rightarrow S^{n-1}$ such that a function supported on the ray $\R^+y$ is mapped to a function supported on the ray $\R^+\Phi(y)$.
\label{Raywiseness}
\end{Lemma}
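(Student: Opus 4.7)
My plan is to give a purely order-theoretic characterization of ``ray-supported'' functions which is robust to the scalar fuzziness introduced by $T$, and then to read off the bijection $\Phi$ from the way $T$ moves the particular ray-indicators $1_{\R^+y}^\infty$. The key claim is the following: for $f\in Cvx_0(\R^n)$ with $f\neq 1_{\{0\}}^\infty$, the effective domain of $f$ is contained in some ray $\R^+y$ if and only if there do \emph{not} exist $g_1,g_2\in Cvx_0(\R^n)\setminus\{1_{\{0\}}^\infty\}$ with $g_i\geq f$ and $g_1\vee g_2=1_{\{0\}}^\infty$. The easy direction holds because $g_i\geq f$ forces $\mathrm{dom}(g_i)\subseteq\mathrm{dom}(f)$, so if $\mathrm{dom}(f)\subseteq\R^+y$ and each $g_i\neq 1_{\{0\}}^\infty$, then the two nontrivial domains share a whole segment of positive multiples of $y$, so $g_1\vee g_2\neq 1_{\{0\}}^\infty$. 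For the reverse direction, if $\mathrm{dom}(f)$ contains $x_1,x_2$ in distinct directions, I would take $g_i$ to be $f$ restricted to the closed segment $[0,x_i]$ and extended by $+\infty$; the two segments meet only at $0$, so $g_1\vee g_2=1_{\{0\}}^\infty$.

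To transfer this characterization through $T$, suppose $Tf$ admits such witnesses $h_1,h_2$ and set $g_i=T^{-1}h_i$. Then $g_i\neq 1_{\{0\}}^\infty$ by Lemma~\ref{lem-ZeroP} together with bijectivity; condition (\ref{TRevEquiv}a) applied to $Tf\leq Tg_i$ yields $g_i\geq f/\tilde C$; and the almost-sup-preservation of Lemma~\ref{InfSupLemma}, combined with $T(1_{\{0\}}^\infty)=1_{\{0\}}^\infty$ and injectivity, forces $g_1\vee g_2=1_{\{0\}}^\infty$. So the witness condition is satisfied by $f/\tilde C$, and since ray-supportedness is scale-invariant $f$ itself is not ray-supported either. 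Reversing the roles of $T$ and $T^{-1}$ shows that ray-supported functions are carried to ray-supported functions and conversely.

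To construct $\Phi$, note that for each $y\in S^{n-1}$ the indicator $1_{\R^+y}^\infty$ is ray-supported and distinct from both $0$ and $1_{\{0\}}^\infty$, so its image $T(1_{\R^+y}^\infty)$ is ray-supported with a single direction, which I set to be $\Phi(y)$. Any $f\in Cvx_0(\R^n)$ supported on $\R^+y$ satisfies $f\geq 1_{\R^+y}^\infty$, so by (\ref{TDefinition}a), $T(1_{\R^+y}^\infty)\leq\tilde C\,Tf$, which forces $Tf=+\infty$ outside $\R^+\Phi(y)$; in particular the choice of $\Phi(y)$ does not depend on the representative. Applying the same construction to $T^{-1}$ yields an inverse to $\Phi$, so $\Phi:S^{n-1}\to S^{n-1}$ is a bijection. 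The main obstacle is the forward direction of the characterization: one has to verify that the segment-restrictions $g_i$ really lie in $Cvx_0(\R^n)$. Convexity, non-negativity and $g_i(0)=0$ are inherited directly, while lower semi-continuity follows because, by convexity of $f$ and $f(0)=0$, the restriction of $f$ to $[0,x_i]$ is finite throughout whenever $f(x_i)<\infty$, so the extension by $+\infty$ off the closed segment is lower semi-continuous on $\R^n$.
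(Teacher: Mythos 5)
Your proof is correct, and its engine is exactly the paper's: the only step the paper actually carries out is showing that the disjoint-support relation $\max(g_1,g_2)=1_{\{0\}}^{\infty}$ is preserved under $T$ via Lemma \ref{InfSupLemma} together with $T1_{\{0\}}^{\infty}=1_{\{0\}}^{\infty}$, which is precisely your transfer step. The rest of your argument --- the order-theoretic characterization of ray-supported functions by the non-existence of two disjoint majorants, and the extraction of the bijection $\Phi$ from the images of the ray indicators $1_{\R^+y}^{\infty}$ --- is the part the paper outsources to \cite{AM2010}, so you have in effect just made the cited argument explicit.
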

\begin{proof} Let us check that if $\max(f,h) = 1_{\{0\}}^{\infty}$, then $\max(Tf, Tg) = 1_{\{0\}}^{\infty}$. This follows immediately from the fact that $\max(Tf, Tg) \geq \tilde{c}^2T(\max(f,g)) = 1_{\{0\}}^{\infty}$ according to the previous lemma.
The proof of the lemma follows exactly in the same way as in \cite{AM2010}.
\end{proof}

\begin{Lemma} \label{lem-SupportOnRay}
Let $f \in Cvx_0(\R^n)$ and $y \in S^{n-1}$. Then, $(supp Tf)\cap \R^+\Phi(y) = supp T(\max(f, 1_{\R^+y}^\infty))$.
\end{Lemma}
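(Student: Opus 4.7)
The plan is to introduce $g := 1_{\R^+ y}^\infty$ and write $f' := \max(f, g)$, so that the claim reduces to computing the effective domains of $Tf'$ and $Tg$. By Lemma \ref{InfSupLemma},
$$\tilde{c}^2\,Tf' \;\leq\; \max(Tf, Tg) \;\leq\; \tilde{C}\,Tf',$$
so $Tf'$ and $\max(Tf,Tg)$ have the same effective domain. Since the effective domain of a pointwise maximum of two convex functions is the intersection of the two effective domains, we obtain
$$\operatorname{supp} Tf' \;=\; \operatorname{supp} Tf \,\cap\, \operatorname{supp} Tg.$$
The lemma therefore follows once we establish $\operatorname{supp} Tg = \R^+\Phi(y)$.

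The inclusion $\operatorname{supp} Tg \subseteq \R^+\Phi(y)$ is immediate from Lemma \ref{Raywiseness}, since $g\in F_y$ where $F_y := \{h\in Cvx_0(\R^n):\operatorname{supp} h\subseteq\R^+ y\}$. For the reverse inclusion we need $T$ to restrict to a \emph{bijection} $F_y \to F_{\Phi(y)}$, not merely to map into it. Lemma \ref{Raywiseness} applied to $T^{-1}$ (which is almost order preserving under the standing hypothesis, as used throughout the section) yields a bijection $\Psi:S^{n-1}\to S^{n-1}$ with $T^{-1}(F_{\Phi(y)})\subseteq F_{\Psi(\Phi(y))}$; composing with $T$ and using that $F_y$ contains non-zero elements forces $\Psi\circ\Phi = \operatorname{Id}$, and hence $T(F_y) = F_{\Phi(y)}$.

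Now the key observation is that $g$ is the pointwise minimum element of $F_y$: it equals $0$ on $\R^+y$ and $+\infty$ off it, so $g\leq h$ for every $h\in F_y$. Property (\ref{TDefinition}a) then gives $Tg\leq\tilde{C}\,Th$ for every such $h$, so $\operatorname{supp} Tg \supseteq \operatorname{supp} Th$. Taking $h\in F_y$ to be the $T$-preimage (which exists by the bijection just established) of $1_{\R^+\Phi(y)}^\infty \in F_{\Phi(y)}$, whose effective domain is all of $\R^+\Phi(y)$, yields $\operatorname{supp} Tg \supseteq \R^+\Phi(y)$ and completes the identification.

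The only non-routine step is the promotion of Lemma \ref{Raywiseness} to a bijection $T(F_y) = F_{\Phi(y)}$; without this one cannot rule out that $\operatorname{supp} Tg$ is a proper, say bounded, sub-segment of $\R^+\Phi(y)$. Once the bijection is in hand, the proof is a short assembly of Lemmas \ref{InfSupLemma} and \ref{Raywiseness} together with the elementary fact that the effective domain of a pointwise maximum is the intersection of the effective domains.
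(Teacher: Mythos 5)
Your proof is correct and follows essentially the same route as the paper's: both arguments hinge on identifying the image of $1_{\R^+y}^\infty$ (the paper pins it down exactly as $1_{\R^+\Phi(y)}^\infty$ via minimality on the ray, while you only determine its support, which suffices) and then applying Lemma \ref{InfSupLemma} to $\max(f,1_{\R^+y}^\infty)$ to compare effective domains. Your explicit verification that $T$ maps the ray-supported functions \emph{onto} those supported on $\R^+\Phi(y)$ is a point the paper leaves implicit in its appeal to ``the reasoning of Lemma \ref{lem-ZeroP}'', but it is the same underlying argument.
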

\begin{proof}
Notice that $T(1_{\R^+y}^\infty) = 1_{\R^+\Phi(y)}$. Indeed, the function $1_{\R^+y}^\infty$ is supported on a ray, and thus must be mapped to a function supported on a ray. In addition, it is the smallest function on $\R^+y$ which, by the reasoning of lemma \ref{lem-ZeroP} must be mapped to the smallest function on $\R^+\Phi(y)$. \\
By lemma \ref{InfSupLemma} we have
\[
\tilde{c}^2\max(Tf, 1_{\R^+\Phi(y)}^\infty) \leq T(\max(f, 1_{\R^+y}^\infty)) \leq \tilde{C}\max(Tf, 1_{\R^+\Phi(y)}^\infty).
\]
Hence, we see that on the ray $R^+\Phi(y)$, the function $Tf$ is finite if and only if the function $T(\max(f, 1_{\R^+y}^\infty))$. This completes the proof.
\end{proof}

\subsection{Convex Functions on $\R^+$}
We have seen that due to the ray-wise-ness, the case of $\R^+$ will give us an idea about the general case. We  will state and proof this special case, which is actually not required for the general one, but is of independent interest.
\begin{thm}
if $T$ and $T^{-1}$ are both $\tilde{C}$-almost order preserving transforms on the class of convex
geometric functions $Cvx_0(\R^+)$ and $T$ is bijective, then there exist positive
constants $\alpha_1$, $\alpha_2$, $\beta_1$, $\beta_2$ (dependent of
$\tilde{C}$) , such that either
\begin{enumerate}
\item[\ref{Stability_Thm}a.] {for all  $f \in Cvx_0(\R^+), \quad \beta_1f(x/\alpha_2) \leq Tf(x) \leq \beta_2f(x/\alpha_1)$, }
\end{enumerate}
or
\begin{enumerate}
\item[\ref{Stability_Thm}b.] {for all  $f \in Cvx_0(\R^+), \quad \alpha_1\J f(x/\beta_2) \leq Tf(x) \leq \alpha_2 \J f(x/\beta_1).$}
\end{enumerate}
\label{Stability_Thm}
\end{thm}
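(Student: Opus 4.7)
\textbf{Proof plan for Theorem \ref{Stability_Thm}.} The approach is to identify two natural totally ordered one-parameter chains in $Cvx_0(\R^+)$, establish a dichotomy for how $T$ acts on them, upgrade the action to an approximate multiplicative functional equation, and finally use Lemma \ref{InfSupLemma} to pull the information from the chains back to arbitrary $f$. The two chains are the linear family $\ell_a(x)=ax$ ($a\in(0,\infty)$) and the indicator family $I_a(x)=1_{[0,a]}^{\infty}$; they are interchanged by $\J$ via $\J\ell_a=I_{1/a}$, and together with the endpoints $0$ and $1_{\{0\}}^{\infty}$ (which are fixed by $T$ by Lemma \ref{lem-ZeroP}) they generate the whole class, in the sense that any $f\in Cvx_0(\R^+)$ is simultaneously the pointwise supremum of the $\ell_a$ lying below it and the modified infimum of the $I_a$ lying above it.

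\textbf{Step 1 (Dichotomy).} I would first show that $T$ either preserves each chain setwise or swaps them, up to bounded multiplicative factors. The two chains can be characterized intrinsically inside $(Cvx_0(\R^+),\le)$: the linear chain consists of functions closed under pointwise maximum (two linears sup to a linear), while the indicator chain is closed under modified infimum. By Lemma \ref{InfSupLemma} these properties pass through $T$ with a controlled loss of constants, so the image of a chain is forced to lie in one of the two chains. A short argument using connectedness of the parameter and monotonicity of $T$ along chains then shows the choice of target chain is the same for \emph{all} $a$, producing the dichotomy of cases (a) and (b). Case (b) is reduced to case (a) by considering $\J\circ T$ in place of $T$.

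\textbf{Step 2 (Parametric Cauchy equation and Hyers--Ulam).} In case (a), write $T\ell_a\asymp \ell_{\phi(a)}$ and $TI_a\asymp I_{\psi(a)}$, where $\asymp$ denotes equality up to a factor bounded by a fixed power of $\tilde C$. The family of truncated-linear functions $\max(\ell_a,I_b)$ has a two-parameter structure that is approximately preserved by $T$ thanks to Lemma \ref{InfSupLemma}; comparing the corner values $\ell_a(b)=ab$ with $\ell_{\phi(a)}(\psi(b))=\phi(a)\psi(b)$ yields the approximate multiplicative relation
\[
\phi(a)\,\psi(b)\asymp ab.
\]
Passing to logarithms $F(s)=\log\phi(e^s)$ and $G(t)=\log\psi(e^t)$, this becomes an additive Hyers--Ulam inequality $|F(s)+G(t)-s-t|\le k\log\tilde C$. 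Applying Theorem \ref{Hyers_Ulam} produces genuinely linear replacements uniformly close to $F$ and $G$, hence multiplicative representatives $\widetilde\phi(a)=a/\alpha$ and $\widetilde\psi(a)=a\alpha$ (a single constant $\alpha$ because of the coupling) with bounded multiplicative error depending only on $\tilde C$, from which the constants $\alpha_1,\alpha_2,\beta_1,\beta_2$ of the theorem are extracted.

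\textbf{Step 3 (Extension to all $f$ and main obstacle).} For the final step, take an arbitrary $f\in Cvx_0(\R^+)$ and represent it as the pointwise supremum of its minorant linears and, dually, as the modified infimum of its majorant indicators. Two applications of Lemma \ref{InfSupLemma}, one to push $T$ through the sup and one to pull it out on the image side, combined with the fact that $T$ acts on the chains as $\ell_a\mapsto\ell_{a/\alpha}$ and $I_a\mapsto I_{a\alpha}$ up to bounded factors, give the two-sided inequality $\beta_1 f(x/\alpha_2)\le Tf(x)\le\beta_2 f(x/\alpha_1)$ of case (a); case (b) follows by precomposition with $\J$. The \emph{main obstacle} will be Step 1: ruling out ``hybrid'' behaviour in which $T$ sends some $\ell_a$ essentially to a linear and others essentially to an indicator. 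This requires extracting genuinely intrinsic, order-theoretic characterizations of the two chains that survive the bounded loss coming from almost order preservation; once the dichotomy is in hand, the remaining steps are comparatively routine stability manipulations.
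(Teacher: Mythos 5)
There are genuine gaps in all three steps, the most serious being the mechanism you propose for the dichotomy. Your intrinsic characterization of the two chains --- ``the linear chain is closed under pointwise maximum, the indicator chain under modified infimum'' --- does not distinguish them: both families are closed under both operations ($\max(1_{[0,a]}^\infty,1_{[0,b]}^\infty)=1_{[0,\min(a,b)]}^\infty$ and $\hat{\inf}(\ell_a,\ell_b)=\ell_{\min(a,b)}$), and in any case closure is a property of a family, not of an individual function, so it cannot tell you that the single image $T\ell_a$ is close to a linear function or to an indicator in the first place. The paper's solution is a join-irreducibility condition on individual functions (property $P$: $f$ is not dominated by $\max(g,h)$ unless it is dominated by $g$ or by $h$), which characterizes exactly the indicators and the linear functions, together with a relaxed version $\tilde{P}$ that is what survives transport through an almost order preserving map; one then shows that functions with $\tilde{P}$ are either indicators or ``almost linear'' ($f'(0)z\le f(z)\le\tilde{C}^3f'(0)z$), and a short comparability argument forces all indicators to go to one type and all linear functions to the other. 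Without something of this kind your Step 1 does not get off the ground, and you yourself identify it as the crux.

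Two further problems. In Step 2, nothing justifies ``comparing corner values'': $T$ is not a pointwise operation, and the fact that $T(\max(\ell_a,I_b))$ is comparable to $\max(\ell_{\phi(a)},I_{\psi(b)})$ merely describes the image triangle; it produces no equation tying $\phi(a)\psi(b)$ to $ab$. The paper instead compares the triangle $\lhd_{z,a}$ with the roof function $g=\hat{\inf}\{1_{[0,tz]}^\infty,\ell_{a/(1-t)}\}$, exploiting that $g\le\lhd_{z,a}$ sharply (the inequality fails for every $a'<a$); this yields a multiplicative Cauchy inequality for $\phi$ alone, then Hyers--Ulam gives $\phi(z)$ comparable to $z^\gamma$, and a separate argument via $T^{-1}$ and the coupling with the second parameter map pins down $\gamma=\pm1$. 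In Step 3, your generating claim is false: the supremum of the linear minorants of $f(x)=x^2$ is $0$, and the modified infimum of its indicator majorants is $1_{\{0\}}^\infty$; neither recovers $f$. The correct reconstruction is through the two-parameter triangle family, $f=\hat{\inf}_y\lhd_{y,f(y)}$, which is what the paper feeds into Lemma \ref{InfSupLemma} to pass from the extremal family to general $f$.
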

The proof of this theorem uses a few preliminary constructions and facts which we introduce and study in sections (\ref{PTildeSection}-\ref{TrianglesSection}).
\subsection{Property $\tilde{P}$} \label{PTildeSection}
To study general functions, we need a family of extremal functions which are easy to deal with and can be used to describe the general case. In the case of geometric convex functions the family of indicators and linear functions is most convenient. A property which uniquely defines such a family was introduced in \cite{AM2010} (property $P$). Due to the modified nature of our problem, we introduce a slightly modified property:
\begin{Definition} \label{def-PropertyPTilde} We say that a function $f$ satisfies property $\tilde{P}$, if there exist no two functions $g,h \in\ Cvx_0(\R^n)$, such that $g \ngeq \tilde{c}^3f$, $h \ngeq \tilde{c}^3f$ but $\max(g, h) \geq f$.
\end{Definition} Note that Definition \ref{def-PropertyPTilde} depends on the constant $\tilde{C}$. \\
Obviously, if a function satisfies property $P$, then it satisfies property $\tilde{P}$. This means that the family of functions which satisfy $\tilde{P}$ contains all the indicator functions and linear functions through $0$.
We will now show that the non-linear functions that have property $\tilde{P}$ cannot dif{}fer greatly from the linear ones, unless they are indicators.

\begin{Lemma} \label{lem-PTildeZeroDerivative}
If $f$ has property $\tilde{P}$ and $f$ is not an indicator, then $f'(0) > 0$.
\end{Lemma}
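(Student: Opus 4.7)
My plan is to argue by contrapositive: if $f \in Cvx_0(\R^+)$ is not an indicator and $f'(0+) = 0$, I will produce $g,h \in Cvx_0(\R^+)$ with $\max(g,h) \geq f$ while neither dominates $\tilde{c}^3 f$, contradicting property $\tilde{P}$. The starting point will be that for convex $f$ with $f(0)=0$ the chord slope $x \mapsto f(x)/x$ is non-decreasing; the hypothesis $f'(0+)=0$ forces this slope to tend to $0$ at the origin, while the non-indicator assumption forces it to attain strictly positive values somewhere. So I can pick $x_1 < x_2$ in the effective domain with $f(x_i) > 0$ and
\[
\frac{f(x_1)}{x_1} < \tilde{c}^3 \, \frac{f(x_2)}{x_2}.
\]

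\textbf{Witnesses.} I would take $h(x) := (f(x_1)/x_1)\, x$; by convexity this chord lies above $f$ on $[0,x_1]$, so $h \geq f$ there, while the slope gap yields $h(x_2) < \tilde{c}^3 f(x_2)$, so $h \not\geq \tilde{c}^3 f$. For the second witness I would take
\[
g(x) := \begin{cases} 0, & 0 \leq x \leq y_0, \\ \dfrac{f(x_1)}{x_1 - y_0}(x - y_0), & y_0 \leq x \leq x_1, \\ f(x), & x \geq x_1, \end{cases}
\]
with $y_0 := x_1 - f(x_1)/f'_+(x_1)$. This choice makes the middle piece have slope exactly $f'_+(x_1)$, so $g$ is convex and lies in $Cvx_0(\R^+)$; moreover $y_0 > 0$, since $f'_+(x_1) > f(x_1)/x_1$ (otherwise $f$ would be linear on $[0, x_1]$ with slope $f'(0+) = 0$, contradicting $f(x_1) > 0$). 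Then $g = f$ on $[x_1, \infty)$, and for any $y_h$ strictly between $\sup\{x : f(x) = 0\}$ and $y_0$ we have $g(y_h) = 0 < \tilde{c}^3 f(y_h)$, so $g \not\geq \tilde{c}^3 f$. Together, $\max(g,h) \geq f$ on $[0, \infty)$, the desired violation of $\tilde{P}$.

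\textbf{Main obstacle.} The construction above degenerates in the special case where $f$ is affine on $[\alpha, \infty)$ for some $\alpha > 0$ (the ``hockey-stick'' $f(x) = s(x - \alpha)_+$): one computes $y_0 = \alpha$ exactly, so the zero set of $g$ coincides with that of $f$ and the second witness collapses to $f$ itself. For this edge case I would use an explicit alternative: fix $\alpha'' \in (\alpha,\, \alpha/(1 - \tilde{c}^3))$ and $\alpha''' \in (\alpha, \alpha'')$, and set $h(x) = s'' x$ with $s'' \in (s(\alpha'' - \alpha)/\alpha'',\, \tilde{c}^3 s)$ (a non-empty interval by the bound on $\alpha''$) together with $g(x) = s_g(x - \alpha''')_+$ where $s_g = s(\alpha'' - \alpha)/(\alpha'' - \alpha''')$. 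A direct check will then show $h \geq f$ on $[0, \alpha'']$ while $h < \tilde{c}^3 f$ for $x$ large, and $g \geq f$ on $[\alpha'', \infty)$ while $g \equiv 0 < \tilde{c}^3 f$ on $(\alpha, \alpha''')$, again contradicting $\tilde{P}$.
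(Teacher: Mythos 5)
Your overall strategy is the paper's: assume $f'(0+)=0$, use the monotonicity of the chord slope $x\mapsto f(x)/x$ to pick $x_1$ with $0<f(x_1)/x_1<\tilde{c}^3\lim_{t\to\infty}f(t)/t$, take the line $h$ through the origin and $(x_1,f(x_1))$ as one witness, and cover $f$ beyond $x_1$ with a second witness. The difference is in the second witness, and that is where your argument has a genuine gap. The paper takes the indicator $1_{[0,x_1]}^{\infty}$, which dominates $f$ past $x_1$ for free (it is $+\infty$ there) and fails to dominate $\tilde{c}^3f$ at $x_1$ itself simply because $f(x_1)>0$; no further information about $f$ is needed. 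You instead take a finite function $g$ that follows $f$ on $[x_1,\infty)$ and descends along the right tangent at $x_1$, and you need the foot $y_0$ of that tangent to lie strictly to the right of $\alpha:=\sup\{x:f(x)=0\}$. This fails exactly when $f$ is affine, with no kink at $x_1$, on $[\alpha,x_1]$ --- and crucially your choice of $x_1$ is \emph{constrained}: to get $h\not\geq\tilde{c}^3f$ you must have $f(x_1)/x_1<\tilde{c}^3M$ with $M=\lim_{t\to\infty}f(t)/t$, which forces $x_1$ into an initial segment of the domain. Your edge case treats only the global hockey-stick $f(x)=s(x-\alpha)_+$, but the degeneration occurs more broadly.

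Concretely, take $\tilde{c}^3=0.9$ and let $f$ be $0$ on $[0,1]$, affine with slope $1$ on $[1,100]$, and affine with slope $1.05$ on $[100,\infty)$. Then $M=1.05$, and the admissibility condition $f(x_1)/x_1<0.945$ forces $x_1<1/0.055\approx 18.2$, so every admissible $x_1$ lies inside the first affine piece, $f'_+(x_1)=1$, and $y_0=x_1-(x_1-1)=1=\alpha$: your $g$ collapses to $f$ for every admissible choice of $x_1$, while $f$ is not of the hockey-stick form your alternative construction assumes (and that construction's verification of $g\geq f$ on $[\alpha'',\infty)$ explicitly uses the single global slope $s$). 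So the case analysis is incomplete. The clean repair is the paper's: replace your $g$ by the indicator $1_{[0,x_1]}^{\infty}$, which handles all cases uniformly and makes the affine/non-affine distinction irrelevant. (Alternatively, one can check that your hockey-stick construction extends, with $s_g$ chosen larger than $\sup f'$, to every $f$ that is affine on the whole admissible range of $x_1$ --- but that has to be stated and proved.)
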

\begin{proof}
Assume otherwise, that is, $f'(0) = 0$. Since $f$ is not an indicator, there exists a $x_0 > 0$ such that $f(x_0) > 0$. Define
\[
L(x) = \frac{f(x_0)}{x_0} x,
\]
and $L_2(x):=\tilde{c^3}L_1(x)$.
Since $f'(0) = 0$, there exists a point $x_1$ such that $L_2(x_1) = f(x_1)$. Hence, it holds that $\max(L_1, 1_{[0,x_1]}^\infty) \geq f$. Since $f$ has property $\tilde{P}$ we have that either $1_{[0,x_1]}^\infty \geq \tilde{c}^3f$ or $L_1 \geq \tilde{c}^3f$. Clearly, neither of the inequalities holds and this is a contradiction to the fact that $f$ has property $\tilde{P}$.
\end{proof}

\begin{Lemma}
Every function with property $\tilde{P}$ that is not an indicator can be bounded by
\[
f'(0)z \leq f(z) \leq \tilde{C}^3 f'(0)z.
\]
\label{TildeP_Bounds}
\end{Lemma}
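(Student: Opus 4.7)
The lower bound is immediate from convexity: since $f\in Cvx_0(\R^+)$ with $f(0)=0$, the right derivative $f'(0)$ exists in $[0,\infty)$ and the supporting line at the origin lies below the graph, giving $f(z)\geq f'(0)\,z$ for every $z\geq 0$.

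For the upper bound I would argue by contradiction. Suppose there exists some $z_0>0$ with $f(z_0)>\tilde C^{3}f'(0)\,z_0$. The key fact I exploit is that for a convex function with $f(0)=0$, the ratio $x\mapsto f(x)/x$ is nondecreasing on $(0,\infty)$ and tends to $f'(0)$ as $x\to 0^{+}$. By Lemma~\ref{lem-PTildeZeroDerivative} we have $f'(0)>0$, hence $f(x)\geq f'(0)x>0$ for every $x>0$. Choose $x_1\in(0,z_0)$ small enough that
\[
\frac{f(x_1)}{x_1}<\tilde C^{-3}\,\frac{f(z_0)}{z_0},
\]
which is possible since the left-hand side tends to $f'(0)$, and $f'(0)<\tilde C^{-3}f(z_0)/z_0$ by the contradiction hypothesis.

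The witnesses violating property $\tilde P$ I would take to be a linear function through the origin paired with an indicator cutting off at $x_1$:
\[
g(x):=\frac{f(x_1)}{x_1}\,x,\qquad h(x):=1_{[0,x_1]}^{\infty}(x).
\]
Monotonicity of $f(x)/x$ gives $f(x)\leq g(x)$ for all $x\leq x_1$, while $h(x)=+\infty$ for $x>x_1$, so $\max(g,h)\geq f$ on all of $\R^+$. On the other hand, evaluating at $z_0$ gives $g(z_0)=(f(x_1)/x_1)\,z_0<\tilde c^{3}f(z_0)$ by the choice of $x_1$, so $g\ngeq \tilde c^{3}f$; and evaluating $h$ at $x_1$ gives $h(x_1)=0<\tilde c^{3}f(x_1)$ since $f(x_1)>0$, so $h\ngeq \tilde c^{3}f$. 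This contradicts property $\tilde P$ and proves $f(z)\leq \tilde C^{3}f'(0)\,z$ for every $z$.

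\textbf{Main obstacle.} The only creative step is selecting the right pair of extremal witnesses — the chord through $(0,0)$ and $(x_1,f(x_1))$, paired with the indicator of $[0,x_1]$. Once these are in hand, the three verifications ($\max(g,h)\geq f$, $g\not\geq \tilde c^{3}f$, $h\not\geq \tilde c^{3}f$) all reduce to the standard fact that $f(x)/x$ is nondecreasing with limit $f'(0)$ at the origin, together with $f'(0)>0$ from the preceding lemma.
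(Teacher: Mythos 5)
Your proposal is correct and follows essentially the same route as the paper: both prove the lower bound from convexity and derive the upper bound by contradiction, exhibiting a linear function through the origin together with an indicator $1_{[0,x_1]}^{\infty}$ whose maximum dominates $f$ while neither witness dominates $\tilde{c}^{3}f$, thereby violating property $\tilde{P}$. Your write-up, which takes the chord through $(x_1,f(x_1))$ for small $x_1$ and verifies the three conditions explicitly via monotonicity of $f(x)/x$, is in fact somewhat more detailed than the paper's terse argument with the line $L_a$, but the underlying idea is identical.
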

\begin{proof} First note that by Lemma \ref{lem-PTildeZeroDerivative} $f'(0) > 0$. It is clear that $f(z)$ is bounded by $L_1(z) := f'(0)z$ from below. Assume that $L_2(z) := \tilde{C}^3L_1(z)$ intersects $f(z)$ at some point $x_0$.  This means that $L_1(z)$ intersects $\tilde{c}^3f(z)$ at $x_0$. Hence, the derivative of $\tilde{c}^3f(x)$ at $x_0$ is bigger than $f'(0)$ (otherwise, they wouldn't intersect). So there exists a constant $a > f'(0)$ such that $L_a(z) = az$ intersects both, $f(z)$ and $\tilde{c}^3f(z)$. This is a contradiction to property $\tilde{P}$ (take $L_a$ and an indicator function to see this).
\end{proof}
\begin{Lemma}
If a function $f$ with property $\tilde{P}$, equals $\infty$ for $x \geq x_0$ for some $x_0$,
it must be an indicator.
\end{Lemma}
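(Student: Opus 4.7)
The statement follows quickly from Lemma \ref{TildeP_Bounds}. My plan is a short argument by contradiction: assume $f$ has property $\tilde P$ and satisfies $f(x) = +\infty$ for every $x \geq x_0$, yet is \emph{not} an indicator, and derive a contradiction with the hypothesis at $x_0$.

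Before invoking Lemma \ref{TildeP_Bounds} I would first check that $f'(0) < +\infty$, so that the upper bound it provides is meaningful. Since $f \in Cvx_0(\R^+)$ is convex with $f(0) = 0$, the quotient $f(h)/h$ is non-decreasing in $h > 0$, and therefore $f'(0) = \inf_{h > 0} f(h)/h$. If this infimum were $+\infty$, then $f(h) = +\infty$ for every $h > 0$, i.e.\ $f = 1_{\{0\}}^\infty$, which is itself an indicator and contradicts the standing assumption. Hence $f'(0)$ is a finite number (and strictly positive by Lemma \ref{lem-PTildeZeroDerivative}, though positivity is not needed here).

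Now I apply Lemma \ref{TildeP_Bounds} to $f$: since $f$ has property $\tilde P$ and is not an indicator, $f(z) \leq \tilde C^3 f'(0)\, z$ for every $z \geq 0$. Specializing to $z = x_0$ gives $f(x_0) \leq \tilde C^3 f'(0)\, x_0 < +\infty$, which directly contradicts the hypothesis $f(x_0) = +\infty$. Therefore $f$ must be an indicator, completing the proof. The only point requiring any care — and it can hardly be called an obstacle — is verifying finiteness of $f'(0)$ for non-indicator $f$; once that is established, the preceding lemma does all the work.
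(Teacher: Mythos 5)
Your proof is correct as a deduction from the stated lemmas, but it takes a genuinely different route from the paper. The paper does not invoke Lemma \ref{TildeP_Bounds} at all; it argues directly from Definition \ref{def-PropertyPTilde}: assuming $f$ is not an indicator, it picks $x_1 < x_0$ with $0 < f(x_1) = c < \infty$ and exhibits the explicit pair $g(x) = \frac{c}{x_1}x$, $h = 1_{[0,x_1]}^{\infty}$; convexity and $f(0)=0$ give $\max(g,h) \geq f$, while $g \ngeq \tilde{c}^3 f$ (since $g$ is finite at $x_0$) and $h \ngeq \tilde{c}^3 f$ (since $h(x_1) = 0 < \tilde{c}^3 c$), contradicting property $\tilde{P}$. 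You instead read the conclusion off the upper bound $f(z) \leq \tilde{C}^3 f'(0) z$ of Lemma \ref{TildeP_Bounds}, after the correct and necessary verification that $f'(0) = \inf_{h>0} f(h)/h$ is finite for a non-indicator; this is shorter and formally valid given that lemma as stated. The one caution: the paper's proof of Lemma \ref{TildeP_Bounds} reasons about the derivative of $\tilde{c}^3 f$ at an intersection point, which reads most naturally for $f$ finite on its ray, and a plausible reason the authors prove the present lemma separately and from scratch is precisely to dispose of the infinite-valued case without leaning on that argument. Your proof therefore inherits whatever looseness Lemma \ref{TildeP_Bounds} has for functions taking the value $+\infty$ --- though the construction needed to patch that (a line together with an indicator) is exactly the paper's direct proof here, so nothing is fundamentally at risk.
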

\begin{proof} Assume there exists $x_1 < x_0$ such that $f(x_1) =
c > 0$. Then, define two functions: $g(x) = \frac{c}{x_1}x$ and
$h(x) = 1_{[0, x_1]}^{\infty}$. Obviously, $g(x) \ngeq \tilde{c}^3f(x)$ and
$h(x) \ngeq \tilde{c}^3f(x)$, but $\max(g,h) \geq f$, which
contradicts $f$ having property $\tilde{P}$.
\end{proof}
From now on, a function $f$ with property $\tilde{P}$ that is not an
indicator, will be called \textit{almost linear} function.

\subsection{Properties of $\tilde{P}$}
\begin{Lemma}If $T$ is almost order preserving, and $f$ has property $P$, then $Tf$ has property $\tilde{P}$.
\label{PToTildeP}
\end{Lemma}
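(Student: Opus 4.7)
The plan is to argue by contradiction. Suppose $Tf$ does not have $\tilde P$: by Definition \ref{def-PropertyPTilde} there exist $G, H \in Cvx_0(\R^n)$ satisfying
\[
G \ngeq \tilde c^3 Tf, \qquad H \ngeq \tilde c^3 Tf, \qquad \max(G,H) \geq Tf.
\]
Using the bijectivity of $T$, I set $g := T^{-1} G$ and $h := T^{-1} H$ and attempt to show that the pair $(g,h)$ witnesses the failure of property $P$ for $f$, contradicting the hypothesis. So the whole proof is about transporting each of the three conditions above back through $T^{-1}$ while keeping track of the accumulated $\tilde c$-factors.

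For the two ``$\ngeq$'' conditions, the argument is a one-line contrapositive of condition \ref{TDefinition}a. If one had $g \geq f$, then condition \ref{TDefinition}a applied to $T$ would give $Tf \leq \tilde C\, Tg$, i.e., $G = Tg \geq \tilde c\, Tf \geq \tilde c^3 Tf$ (since $\tilde c \leq 1$), contradicting the choice of $G$. Hence $g \ngeq f$, and symmetrically $h \ngeq f$.

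For the max-domination, I would combine two ingredients. First, from $\max(G,H) \geq Tf$ and the fact that $T^{-1}$ is also almost order preserving (condition \ref{TDefinition}a), one gets $f = T^{-1}(Tf) \leq \tilde C\, T^{-1}(\max(G,H))$. Second, Lemma \ref{InfSupLemma} applied to $T^{-1}$ yields $T^{-1}(\max(G,H)) \leq \tilde C^2 \max(g,h)$. Chaining these gives
\[
\max(g,h) \;\geq\; \tilde c^3 \, f,
\]
precisely the $\tilde c^3$-weakened domination that Definition \ref{def-PropertyPTilde} is designed to account for.

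The main obstacle is the careful bookkeeping of the three accumulated $\tilde c$-factors: one arises from condition \ref{TDefinition}a applied to $T^{-1}$, and two from the ratio between the two bounds in Lemma \ref{InfSupLemma}. These are exactly matched by the $\tilde c^3$ slack built into the weakened property $\tilde P$, which is why $\tilde P$ (rather than $P$ itself) is the right target of the lemma. Once the three conditions above are in hand, they give the configuration ($g \ngeq f$, $h \ngeq f$, $\max(g,h) \geq \tilde c^3 f$) which is ruled out by property $P$ of $f$ in the formulation of \cite{AM2010}, producing the desired contradiction.
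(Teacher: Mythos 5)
Your overall strategy (pull the witnesses of the failure of $\tilde P$ back through $T^{-1}$ and contradict property $P$ of $f$) is the same as the paper's, but the bookkeeping fails at the last step, and the failure is not cosmetic. After your two reductions you hold the configuration $g \ngeq f$, $h \ngeq f$, $\max(g,h) \geq \tilde c^{3} f$. This is \emph{not} ruled out by property $P$ of $f$. Property $P$ is the exact statement: $\max(g,h)\geq f$ forces $g\geq f$ or $h\geq f$. Applying it to the rescaled functions $\tilde C^{3}g$, $\tilde C^{3}h$ (whose maximum does dominate $f$) only yields $g\geq \tilde c^{3}f$ or $h\geq \tilde c^{3}f$, which is perfectly compatible with $g\ngeq f$ and $h\ngeq f$, since $\tilde c^{3}f\leq f$. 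In short, the slack has accumulated on the wrong side: you need the max-domination to be exact and the two non-domination statements to carry the $\tilde c^{3}$, whereas you produced the opposite. Nor can you repair this by strengthening your first step to $g\ngeq\tilde c^{3}f$: that would require comparing $Tf$ with $T(\tilde C^{3}g)$ starting from $f\leq\tilde C^{3}g$, and since $T$ is not assumed homogeneous, $T(\tilde C^{3}g)$ has no controlled relation to $\tilde C^{3}Tg=\tilde C^{3}G$.

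The paper's proof resolves exactly this difficulty by pre-scaling on the \emph{image} side before pulling back: it writes $g=\tilde c^{2}T\phi$ and $h=\tilde c^{2}T\psi$, i.e.\ it pulls back $\tilde C^{2}g$ and $\tilde C^{2}h$, which is legitimate because these are still in the range of the bijection $T$. Lemma \ref{InfSupLemma} then gives $\max(g,h)\leq\tilde c\,T(\max(\phi,\psi))$, hence $Tf\leq\tilde c\,T(\max(\phi,\psi))$, and condition (\ref{TRevEquiv}b) yields the \emph{exact} inequality $f\leq\max(\phi,\psi)$. Property $P$ applies directly, and pushing the conclusion $f\leq\phi$ forward with condition (\ref{TDefinition}a) gives $Tf\leq\tilde C\,T\phi=\tilde C^{3}g$ --- it is here, at the very end, that the $\tilde c^{3}$ slack of Definition \ref{def-PropertyPTilde} is consumed. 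You should restructure your argument along these lines.
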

\begin{proof} Assume that $f$ has property $P$, but $Tf$ does not have $\tilde{P}$. So there exist $g, h$ such that $g \ngeq \tilde{c}^3 Tf$, $h \ngeq \tilde{c}^3 Tf$, but $\max(g,h) \ge Tf$. Since $T$ is bijective, there exist $\phi, \psi \in Cvx_0(\R^n)$ such that
\[
g = \tilde{c}^2 T\phi, \qquad h = \tilde{c}^2T\psi.
\]
Using property (\ref{sup_stab}) we write:
\[
\tilde{c}T(\max(\phi, \psi)) \geq \max(g, h) \geq Tf.
\]
Now, applying condition (\ref{TRevEquiv}b), we conclude that $f \leq
\max(\phi, \psi)$. Since $f$ has property $P$, then either $f
\leq \phi$ or $f \leq \psi$. After applying condition
(\ref{TDefinition}a), we get that either $Tf \leq \tilde{C}T\phi =
\tilde{C}^3 g$, or $Tf \leq \tilde{C^3} h$, which is a
contradiction. The same proof applies for $T^{-1}$, so $T^{-1}$ also maps functions with property $P$, to functions with property $\tilde{P}$.
\end{proof}

\begin{Lemma}
$T$ either maps all indicators to indicators or it maps all
indicators to almost linear functions.
\end{Lemma}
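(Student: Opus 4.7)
The plan is to divide the proof into a per-ray classification followed by a global coupling that forces the classification to be the same on every ray. In the first stage I restrict $T$ to functions supported on a single ray $\R^+ y$; by ray-wise-ness (Lemma~\ref{Raywiseness}) this restriction is a bijection onto functions supported on $\R^+ \Phi(y)$. In this one-dimensional subclass every segment indicator $1_{[0, ty]}^\infty$ has property $P$, and Lemma~\ref{PToTildeP} gives that $T(1_{[0, ty]}^\infty)$ has property $\tilde P$. The structural results of Section~\ref{PTildeSection}, particularly Lemmas~\ref{lem-PTildeZeroDerivative} and~\ref{TildeP_Bounds}, then force $T(1_{[0, ty]}^\infty)$, viewed on $\R^+ \Phi(y)$, to be either an indicator $1_{[0, s(y, t) \Phi(y)]}^\infty$ or an almost linear function. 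A short order-comparison argument then fixes a single type on each ray: if $T(1_{[0, t_1 y]}^\infty)$ were an indicator while $T(1_{[0, t_2 y]}^\infty)$ were almost linear, the monotonicity of $1_{[0, \cdot y]}^\infty$ combined with condition~(\ref{TDefinition}a) would pinch one image between a constant multiple of the other, which is impossible because indicators are $+\infty$ past their compact support while almost linear functions are finite on the whole ray by Lemma~\ref{TildeP_Bounds}.

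The main obstacle is the global coupling. I plan to argue by contradiction: suppose some $y_1 \in S^{n-1}$ is of type $A$ (indicator $\to$ indicator) while some $y_2$ is of type $B$ (indicator $\to$ almost linear). Take $f = 1_B^\infty$ for $B$ a Euclidean unit ball centred at the origin, so that $f|_{\R^+ y} = 1_{[0, y]}^\infty$ for every unit direction $y$. Lemma~\ref{lem-SupportOnRay} combined with Lemma~\ref{InfSupLemma} forces $Tf|_{\R^+ \Phi(y)}$ to be exactly $1_{[0, s(y) \Phi(y)]}^\infty$ on every type-$A$ ray, so $Tf$ vanishes on every segment $[0, s(y) \Phi(y)]$; on $\R^+ \Phi(y_2)$ it is sandwiched between constant multiples of $T(1_{[0, y_2]}^\infty) \asymp a z$, hence strictly positive for $z > 0$. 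Because $\Phi$ is a bijection of $S^{n-1}$, one can find $y_n \neq y_2$ with $\Phi(y_n) \to \Phi(y_2)$. The aim is to show that $s(y_n) \geq \delta > 0$ uniformly, so that the closed convex hull of the segments $[0, s(y_n) \Phi(y_n)]$ contains $[0, \delta \Phi(y_2)]$; convexity and lower semi-continuity of $Tf$ then force $Tf = 0$ on $[0, \delta \Phi(y_2)]$, contradicting $Tf \asymp az > 0$ there.

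The delicate step I expect to struggle with is the uniform lower bound on $s(y)$. My plan is to extract it from condition~(\ref{TDefinition}a) applied to an order comparison of $1_{[0, y]}^\infty$ with a fixed reference indicator (for example the indicator of a slightly smaller or larger ball), whose $T$-image on each type-$A$ ray provides a uniform constant of proportionality independent of $y$. If this direct route is too weak given only the multiplicative constants $\tilde c^{2}, \tilde C$ of Lemma~\ref{InfSupLemma}, a fallback is to run the analogous argument through $T^{-1}$, which is almost order preserving by Remark~\ref{TRevEquiv}, and to combine the two sets of inequalities to drive the slope $a$ of the almost linear image on $\R^+ \Phi(y_2)$ to zero, thereby contradicting Lemma~\ref{lem-PTildeZeroDerivative}.
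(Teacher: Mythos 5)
Your first stage is, in substance, the paper's entire proof. This lemma sits among the preliminaries (Sections \ref{PTildeSection}--\ref{TrianglesSection}) for Theorem \ref{Stability_Thm}, i.e.\ it is a statement about $Cvx_0(\R^+)$ (equivalently, about the restriction of $T$ to a single ray): here ``all indicators'' means all $1_{[0,x]}^\infty$ with $x\in\R^+$, and any two of these are comparable. The paper takes $x<y$, notes $1_{[0,y]}^\infty\le\tilde c\,1_{[0,x]}^\infty$ (multiplying a $\{0,+\infty\}$-valued function by $\tilde c$ changes nothing), applies the order conditions on $T$, and concludes that an almost linear function would have to be comparable to an indicator --- impossible, since by Lemmas \ref{lem-PTildeZeroDerivative} and \ref{TildeP_Bounds} an almost linear function is finite and strictly positive on the whole open ray, while an indicator vanishes on its support and is $+\infty$ beyond it. That is exactly your ``short order-comparison argument,'' so for the statement actually being proved you are done after your first paragraph.

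Your second and third paragraphs attack a different statement --- consistency of the type \emph{across} rays in $\R^n$ --- which the paper postpones to the lemma on $S_0$ and $S_1$ in the subsection ``Completing the Proof in $\R^n$.'' Taken as a proof of that later lemma, your plan has a genuine gap beyond the one you flag: you need a sequence $y_n$ of type-$A$ directions with $\Phi(y_n)\to\Phi(y_2)$, but bijectivity of $\Phi$ only supplies \emph{some} directions accumulating at $\Phi(y_2)$; nothing rules out that every direction near $\Phi(y_2)$ is the image of a type-$B$ direction, in which case the limiting argument never starts. Controlling this requires information about the geometry of $S_0$, $S_1$ and their images, which is precisely what the paper establishes first: it shows $\Phi(S_0)$ and $\Phi(S_1)$ are convex subsets of the sphere (via convexity of the support of $Tf$ for $f=1_B^\infty$ and for $g(x)=|x|$, together with Lemma \ref{lem-SupportOnRay}), so if both are nonempty they are complementary half-spheres, and then reaches a contradiction by producing a function whose image would need a convex support that is unbounded in one half-space and bounded in the other. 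That route needs no uniform lower bound on $s(y)$ and no limiting argument; if you intend to prove the global version, you should adopt it or supply the missing approximation step.
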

\begin{proof}
Assume that the claim is false and there exist two indicators $1_{[0,x]}^\infty, 1_{[0, y]}^\infty$ which are mapped to an indicator $1_{[0,x']}^\infty$ and an almost linear function $f$, respectively. Assume, without loss of generality, that $x < y$. Then we know that $1_{[0,y]}^\infty \leq \tilde{c}1_{[0, x]}^\infty$, which by properties of $T$ implies that $f$ and $1_{[0,x']}^\infty$ are comparable. But we know that an almost linear function cannot be comparable to an indicator, and the proof is complete.
\end{proof}

\begin{Lemma} $T$ either maps all linear functions to
almost linear functions, or it maps them to the indicators.
\end{Lemma}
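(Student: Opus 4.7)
The plan is to combine Lemma \ref{PToTildeP} with a simple comparability argument, mirroring the strategy used for the analogous lemma about indicators just above. First, any linear function $L_a(x) = ax$ with $a > 0$ satisfies property $P$ on $Cvx_0(\R^+)$, being one of the extremal functions generating the class in the sense of \cite{AM2010}. Hence by Lemma \ref{PToTildeP}, $TL_a$ satisfies property $\tilde{P}$, and by the structural results of Section \ref{PTildeSection}, $TL_a$ is either an indicator $1_{[0,\alpha]}^{\infty}$ or an almost linear function. The content of the lemma is therefore the dichotomy: the set of $a>0$ for which $TL_a$ is an indicator cannot be a proper nonempty subset of $(0,\infty)$.

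Suppose for contradiction that $TL_{a_1}$ is an indicator and $TL_{a_2}$ is almost linear for some $a_1, a_2 > 0$. I would first record the key incompatibility between the two shapes: by Lemma \ref{TildeP_Bounds} any almost linear function $g$ satisfies $g'(0)y \leq g(y) \leq \tilde{C}^3 g'(0) y$, so $g$ is finite everywhere and strictly positive for $y>0$, whereas an indicator $1_{[0,\alpha]}^{\infty}$ equals $0$ on $[0,\alpha]$ and $+\infty$ beyond. Consequently the pointwise inequality $1_{[0,\alpha]}^{\infty} \leq g$ fails at large $y$, and $g \leq 1_{[0,\alpha]}^{\infty}$ fails near the origin.

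To leverage this, I would pick some $a_3 \geq \tilde{C} \max(a_1,a_2)$, which gives $L_{a_1}, L_{a_2} \leq \tilde{c} L_{a_3}$ pointwise. Condition (\ref{TDefinition}b) then forces both $TL_{a_1} \leq TL_{a_3}$ and $TL_{a_2} \leq TL_{a_3}$. Now whichever of the two possibilities $TL_{a_3}$ realizes, one of these inequalities is inconsistent with the shape considerations above: if $TL_{a_3}$ is almost linear, then $TL_{a_1} \leq TL_{a_3}$ fails at infinity (indicator versus finite almost linear function); if $TL_{a_3}$ is an indicator, then $TL_{a_2} \leq TL_{a_3}$ fails near $0$ (strictly positive almost linear function versus zero on $[0,\alpha_3]$). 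Either way we obtain a contradiction, proving the dichotomy.

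I anticipate no serious obstacle. The only mild subtlety is that a direct comparison between $L_{a_1}$ and $L_{a_2}$ may be unavailable when the ratio of slopes lies in $(\tilde{c},\tilde{C})$, since then neither $L_{a_1}\leq\tilde{c}L_{a_2}$ nor $L_{a_2}\leq\tilde{c}L_{a_1}$ holds and condition (\ref{TDefinition}b) gives no information. Chaining through a third linear function with much larger slope bypasses this issue uniformly and is the only real idea the proof requires.
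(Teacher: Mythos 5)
Your proof is correct and follows essentially the same route as the paper's: both rest on the fact that any two linear functions through the origin are mutually comparable while an indicator and an almost linear function cannot be comparable up to any constant, which forces the dichotomy. The only difference is that the paper compares the two linear functions directly via condition (\ref{TDefinition}a) (if $l_a\le l_b$ then $Tl_a\le \tilde{C}Tl_b$, and the extra factor $\tilde{C}$ is harmless), so your chaining through a third slope $a_3$ with condition (\ref{TDefinition}b) is valid but unnecessary.
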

\begin{proof}
Assume that there exist linear functions $l_a, l_b$ such that $T(l_a) = 1_{[0,x]}^\infty$ and $T(l_b)=f$ where $f$ is almost linear. In addition, assume, without loss of generality, that $l_a < l_b$. Then, by properties of $T$ we have $T(l_a) < \tilde{C}T(l_b)$, which is equivalent to $1_{[0,x]}^\infty \leq \tilde{C}f$. But this is a contradiction since indicators and almost linear functions are not comparable.
\end{proof}


\begin{Lemma}
$T$ cannot map all functions with property $P$ to indicators, and it
cannot map all functions with property $P$ to almost linear
functions.
\end{Lemma}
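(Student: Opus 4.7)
The plan is to combine the previous two lemmas to reduce to two ``pure'' cases, and then rule each out. Those two lemmas say that $T$ either sends all indicators to indicators or all indicators to almost linear functions, and, independently, either sends all linear functions to indicators or all linear functions to almost linear functions. The two mixed options are what the theorem ultimately allows; I must rule out the two pure options, namely $T$ sending every non-trivial $P$-function to an indicator, or every non-trivial $P$-function to an almost linear function.

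For the first pure case I would fix arbitrary $a,x>0$ and write $T(l_a)=1_{[0,a']}^\infty$ and $T(1_{[0,x]}^\infty)=1_{[0,x']}^\infty$. Any two indicators are automatically comparable, so Remark \ref{TRevEquiv}(a) transfers the comparability upstairs with a factor of $\tilde{C}$. Both directions then fail at once: $1_{[0,x]}^\infty\le\tilde{C}\,l_a$ is impossible because the left side is $+\infty$ for $y>x$, while $l_a\le\tilde{C}\cdot 1_{[0,x]}^\infty$ is impossible because the right side vanishes on $[0,x]$ although $l_a(y)=ay>0$ there. This gives the contradiction immediately, without any need to match parameters.

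For the second pure case I would introduce the slope functions $\alpha(a):=(Tl_a)'(0)$ and $\beta(x):=(T1_{[0,x]}^\infty)'(0)$; Lemma \ref{TildeP_Bounds} pins the images down to $\alpha(a)z\le T(l_a)(z)\le\tilde{C}^3\alpha(a)z$ and similarly for $\beta$. Conditions (\ref{TDefinition}a)--(\ref{TDefinition}b) combined with these bounds show that $\alpha$ and $\beta$ are $\tilde{C}^4$-monotone in the sense of Lemma \ref{almost_monotonic_lemma}. Next I would invoke supremum/infimum preservation (Lemma \ref{InfSupLemma}) together with $T(0)=0$ and $T(1_{\{0\}}^\infty)=1_{\{0\}}^\infty$ (Lemma \ref{lem-ZeroP}): the identity $\hat{\inf}_a l_a=0$ propagates to $\hat{\inf}_a T(l_a)=0$, which via the lower slope bound forces $\inf_a\alpha(a)=0$, and almost-monotonicity turns this into $\alpha(a)\to 0$ as $a\to 0$; symmetrically $\sup_x 1_{[0,x]}^\infty=1_{\{0\}}^\infty$ yields $\beta(x)\to+\infty$ as $x\to 0$. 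Choosing $a,x$ with $\alpha(a)\le\tilde{C}^{-4}\beta(x)$ then gives
\[
T(l_a)(z)\le\tilde{C}^3\alpha(a)z\le\tilde{C}^{-1}\beta(x)z\le\tilde{c}\,T(1_{[0,x]}^\infty)(z),
\]
and Remark \ref{TRevEquiv}(b) yields $l_a\le 1_{[0,x]}^\infty$, which fails on $(0,x]$.

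The main obstacle is the second case: to ensure that $\alpha(a)\to 0$ and $\beta(x)\to+\infty$ one has to carefully link the abstract preservation of $\sup$ and $\hat{\inf}$ to the concrete slope bounds on almost linear functions. The first case, by contrast, collapses directly from the radical incomparability of indicators and linear functions. Once the limiting behaviour of the slopes is in hand, the final contradiction is a single comparison.
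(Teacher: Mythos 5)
Your proposal is correct, and the indicator case is essentially the paper's argument: both of you exploit that any two indicators are comparable, push the comparison back through $T^{-1}$ via Remark \ref{TRevEquiv}, and observe that a linear function and an indicator can never satisfy $f\leq\tilde{C}g$ in either direction (scaling an indicator by $\tilde{C}$ or $\tilde{c}$ does nothing). The almost-linear case is where you genuinely diverge. The paper picks a single linear function $l_b$ with $Tf\leq\tilde{c}l_b$ and $T(l_a)\leq\tilde{c}l_b$ (possible since almost linear functions are sandwiched between two linear functions), then looks at the \emph{preimage} $T^{-1}(l_b)$, which by Lemma \ref{PToTildeP} applied to $T^{-1}$ must be an indicator or almost linear; either option forces an indicator to be comparable to a linear (or almost linear) function upstairs, a contradiction. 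You instead work entirely downstairs with the slopes $\alpha(a)=(Tl_a)'(0)$ and $\beta(x)=(T1_{[0,x]}^\infty)'(0)$, using Lemma \ref{lem-ZeroP} together with the $\hat{\inf}$/$\sup$ estimates of Lemma \ref{InfSupLemma} applied to $\hat{\inf}_a l_a=0$ and $\sup_x 1_{[0,x]}^\infty=1_{\{0\}}^\infty$ to force $\inf_a\alpha(a)=0$ and $\sup_x\beta(x)=+\infty$, and then compare one image directly to $\tilde{c}$ times the other. Both routes are sound; the paper's is shorter and needs only the sandwich bound of Lemma \ref{TildeP_Bounds} plus the $P\to\tilde{P}$ lemma for $T^{-1}$, while yours trades that for the sup/inf machinery and is in fact slightly stronger than necessary (you only need $\inf_a\alpha(a)<\tilde{C}^{-4}\beta(x)$ for one fixed $x$, so the almost-monotonicity of $\alpha,\beta$ and the limit statements $\alpha(a)\to 0$, $\beta(x)\to+\infty$ can be dropped; the bare identity $\hat{\inf}_a T(l_a)=0$ already yields the required pair $(a,x)$). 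One small point worth writing out if you flesh this in: the reason $\alpha(a)z\leq\hat{\inf}_a T(l_a)(z)$ passes to $\inf_a\alpha(a)=0$ is that $(\inf_a\alpha(a))\,z$ is itself a convex minorant of every $T(l_a)$, hence lies below the modified infimum, which you have shown is the zero function.
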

\begin{proof} Assume, all functions are mapped to indicators.
Then we may write that $T(1_{[0, x]}^{\infty}) = 1_{[0, y]}^{\infty}$ and $T(l_a) =
1_{[0, z]}^{\infty}$. If, without loss of generality, $y < z$, then $T(l_a) \leq
\tilde{c}T(1_{[0, x]}^{\infty})$. This implies that $l_a$ and $1_{[0, z]}^{\infty}$
are comparable, which cannot be. \\
The other option, is that all functions with property $P$, are
mapped to almost linear functions: $T1_{[0, x]}^{\infty} = f$,
and $T(l_a) = g$. Since both $f$ and $g$ are almost linear, there exists a linear function
$l_b$ such that $f \leq \tilde{c}l_b$ and $g \leq \tilde{c}l_b$. By lemma \ref{PToTildeP}, we know that the function $T^{-1}(l_b)$ is either almost linear or an indicator. If it is an indicator then the inequality $g \leq \tilde{c}l_b$ implies that $l_a \leq T^{-1}(l_b)$ (property \ref{TRevEquiv}b). This is a contradiction as an indicator cannot be comparable to linear function. If it is almost linear, then the inequality $f \leq \tilde{c}l_b$ implies that $1_{[0,x]}^{\infty} \leq T^{-1}(l_b)$. This is again a contradiction since almost linear functions cannot be comparable to indicators. This completes the proof.
\end{proof}
\begin{Lemma}If $T$ maps linear functions to indicators, $T$ preserves property $P$.
\end{Lemma}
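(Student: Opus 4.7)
The plan is a symmetry argument, applying the dichotomies already established to $T^{-1}$ (which by the global setup of Theorem \ref{Stability_Thm} is also $\tilde{C}$-almost order preserving). By hypothesis $T(l_a) = 1_{[0,\psi(a)]}^\infty$ is an indicator, hence $T^{-1}(1_{[0,\psi(a)]}^\infty) = l_a$ is linear; this rules out the branch ``$T^{-1}$ sends all indicators to indicators,'' so $T^{-1}$ must send all indicators to almost linear functions and (by the paired dichotomy for linear functions) all linear functions to indicators. Writing $T^{-1}(l_b) = 1_{[0,\alpha(b)]}^\infty$ and applying $T$ yields $T(1_{[0,\alpha(b)]}^\infty) = l_b$ for every $b > 0$, so the indicator $1_{[0,\alpha(b)]}^\infty$ is mapped to an \emph{exactly} linear function, not merely an almost linear one.

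The task thus reduces to showing that $\alpha : \R^+ \to \R^+$ is a bijection, so that every indicator is of the form $1_{[0,\alpha(b)]}^\infty$. Strict monotonicity of $\alpha$ is immediate from almost order preservation of $T^{-1}$: for $b_1 < b_2$, $l_{b_1} \le l_{b_2}$ gives $1_{[0,\alpha(b_1)]}^\infty \le \tilde{C}\,1_{[0,\alpha(b_2)]}^\infty = 1_{[0,\alpha(b_2)]}^\infty$ (using scale invariance of indicators), so $\alpha(b_1) \ge \alpha(b_2)$, with strict inequality from injectivity of $T^{-1}$. The boundary conditions $\alpha(0^+) = +\infty$ and $\alpha(+\infty) = 0$ follow from $T^{-1}$ fixing both $0$ and $1_{\{0\}}^\infty$ (Lemma \ref{lem-ZeroP}) combined with $l_b \to 0$ as $b \to 0^+$ and $l_b \to 1_{\{0\}}^\infty$ as $b \to \infty$.

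The main obstacle is precluding possible jumps of $\alpha$, since a monotone map with the correct boundary behavior need not be surjective. Suppose for contradiction $\alpha(b_0^-) = x_1 > x_0 = \alpha(b_0)$. Since $\sup_{b < b_0} l_b = l_{b_0}$ and $\sup_{b<b_0} 1_{[0,\alpha(b)]}^\infty = 1_{[0,x_1]}^\infty$, applying the $\sup$-stability of Lemma \ref{InfSupLemma} to $T^{-1}$ forces $1_{[0,x_1]}^\infty$ to be comparable up to positive multiplicative constants to $T^{-1}(l_{b_0}) = 1_{[0,x_0]}^\infty$. Because indicators are invariant under positive scaling, this yields $x_0 = x_1$, a contradiction. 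A symmetric argument using the $\hat{\inf}$-stability, applied to $\hat{\inf}_{b > b_0} l_b = l_{b_0}$, rules out right-jumps of $\alpha$. Hence $\alpha$ is continuous and therefore a bijection of $\R^+$; consequently every indicator is of the form $1_{[0,\alpha(b)]}^\infty$ and is mapped by $T$ to an exactly linear function, which combined with the hypothesis on linear functions establishes that $T$ sends $P$-functions to $P$-functions.
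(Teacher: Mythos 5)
Your proposal is correct, but it takes a genuinely different route from the paper. The paper argues by a ``crossing'' trick: if a non-linear $g$ had $Tg=1_{[0,z]}^{\infty}$, one picks a linear $l_a$ that crosses $g$; since any two indicators are comparable even after multiplication by $\tilde{c}$, Remark \ref{TRevEquiv} forces the sources $g$ and $l_a$ to be comparable, a contradiction. Hence $T$ maps the linear functions \emph{onto} the indicators, $T^{-1}$ maps linear functions to indicators, and a symmetric crossing argument shows the image of each indicator is exactly linear. You instead parametrize $T^{-1}(l_b)=1_{[0,\alpha(b)]}^{\infty}$ and prove $\alpha$ is a continuous, strictly decreasing bijection of $\R^+$, so that every indicator is already of the form $T^{-1}(l_b)$ and is therefore sent by $T$ to an exactly linear function. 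Both arguments ultimately rest on the same structural fact --- indicators absorb positive multiplicative constants, so all ``almost'' inequalities become exact on indicators --- but the paper exploits it through comparability of sources, while you exploit it through the exactness of $\sup$/$\hat{\inf}$ preservation on indicators (Lemma \ref{InfSupLemma}). Your version is longer but more explicit, and it sidesteps the step the paper leaves implicit (that a non-linear convex geometric function can always be crossed by a linear one). One spot to tighten: the boundary conditions $\alpha(0^+)=+\infty$, $\alpha(+\infty)=0$ cannot literally ``follow from $l_b\to 0$ and $l_b\to 1_{\{0\}}^{\infty}$,'' since $T^{-1}$ carries no continuity; but the identities $\sup_{b>0} l_b = 1_{\{0\}}^{\infty}$ and $\hat{\inf}_{b>0}\, l_b = 0$, combined with Lemma \ref{InfSupLemma} and Lemma \ref{lem-ZeroP}, give exactly these limits by the same device you already use to exclude jumps, so this is a presentational rather than a substantive gap.
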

\begin{proof}
Assume that we are in the case where indicators are mapped to almost-linear functions, and linear functions are mapped to indicators. First we will show that $T$ maps the linear functions onto the indicators. If we have a $g$ which is not linear, but $Tg = 1_{[0,z]}^{\infty}$, then we can find a linear function $ax$ that intersects $g$. $l_a$ is mapped to some indicator $1_{[0,y]}$. If $y > z$ then $1_{[0,y]}^{\infty} < \tilde{c}1_{[0,z]}^{\infty}$, hence $g$ and $l_a$ are comparable. This is a contradiction, so $g$ must be linear. \\
Now we know that the indicators are mapped to almost-linear functions under $T$. But $T^{-1}$ maps property $P$ to $\tilde{P}$, so if we restrict it to linear functions, we get only indicators. So the preimage of every linear function is an indicator. But this also means that there are no non-linear functions with property $\tilde{P}$ other than indicators. The reason is similar: If we have a non-linear function $g=T1_{[0,y]}^{\infty}$, then intersect it with some linear function $l_b$. They are not comparable, but the sources are, which is a contradiction. This ends the proof. \\ \\
\end{proof}
\begin{Lemma}If $T$ maps linear functions to indicators, then it is order preserving on functions that have property $P$. \label{Order_Preserving_On_P}
\end{Lemma}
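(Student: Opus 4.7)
The plan is to exploit the \emph{scale-invariance} of indicator functions: for every $\lambda>0$ one has $\lambda\cdot 1_{[0,x]}^{\infty} = 1_{[0,x]}^{\infty}$, since the only values taken are $0$ and $+\infty$. Consequently, whenever an indicator appears on the side where a factor of $\tilde{C}$ (or $\tilde{c}$) is lost in the almost order preserving inequalities, that factor silently disappears and the inequality becomes genuinely order preserving. This single observation is the whole engine of the proof.

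On $Cvx_0(\R^+)$ the functions with property $P$ are precisely the linear functions $l_a(z)=az$ with $a\geq 0$, together with the indicators $1_{[0,x]}^{\infty}$ with $x\geq 0$. The preceding lemma places us in the regime where $T$ sends linear functions bijectively to indicators and indicators bijectively to almost linear functions. I would fix $f,g$ with property $P$ satisfying $f\leq g$ and split into three cases. If $f$ and $g$ are both linear, then $Tf$ and $Tg$ are indicators; condition (\ref{TDefinition}a) gives $Tf\leq \tilde{C}\,Tg$, and by scale-invariance $\tilde{C}\,Tg=Tg$, hence $Tf\leq Tg$. If $f$ and $g$ are both indicators, then scale-invariance of $g$ lets me rewrite the assumption $f\leq g$ in the form $f\leq\tilde{c}\,g$, so condition (\ref{TDefinition}b) applies and yields $Tf\leq Tg$ directly.

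The remaining case to check is a comparison between a linear function and an indicator. On $\R^+$ this can only happen when the linear function is identically $0$ or when the indicator is $1_{\{0\}}^{\infty}$, because on any ray $[0,x]$ on which the indicator is finite it vanishes, while a nonzero linear function is strictly positive. These two extremal pieces of data are already handled by Lemma \ref{lem-ZeroP}, which gives $T0=0$ and $T1_{\{0\}}^{\infty}=1_{\{0\}}^{\infty}$; then $T0$ sits below every function and $T1_{\{0\}}^{\infty}$ sits above every function, so these comparisons are preserved automatically. No step presents a genuine obstacle; the only mild care required is the bookkeeping for which side of the inequality is an indicator, so that the appropriate almost-order-preserving condition ((\ref{TDefinition}a) when the \emph{image} is an indicator, (\ref{TDefinition}b) when the \emph{source} is an indicator) can be invoked without paying any constant.
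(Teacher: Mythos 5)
Your proof is correct and rests on exactly the same mechanism as the paper's: the scale-invariance of indicator functions ($\lambda\cdot 1_{[0,x]}^{\infty}=1_{[0,x]}^{\infty}$) absorbs the constant $\tilde{C}$ (resp.\ $\tilde{c}$) on whichever side of the almost-order-preserving inequality an indicator sits, yielding genuine order preservation for both the indicator-to-linear and linear-to-indicator assignments. Your explicit treatment of the degenerate cross-comparisons (a linear function versus an indicator) is a small addition the paper leaves implicit, but the argument is essentially identical.
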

\begin{proof} Assume that $T(1_{[0,z]}^{\infty})= l_{\phi(z)}$ and $T(l_a) = 1_{[0,c(a)]}^{\infty}$.
If $1_{[0,z]}^{\infty} < 1_{[0,z']}^{\infty}$ ($z' < z$), then $1_{[0,z]}^{\infty} < \tilde{c}1_{[0,z']}^{\infty}$ and using property (\ref{TDefinition}a) we write $\phi(z) < \phi(z')$. So we see that $\phi(z)$ is monotone decreasing and continuous. The same applies for $c(a)$.
\end{proof}
\subsection{Triangles} \label{TrianglesSection}
Define triangle:
$\lhd_{z,c } = \max(l_c, 1_{[0,z]}^{\infty}).$ Using facts shown above, we conclude that:
\[
\tilde{c} \lhd_{z', c'} \leq  T(\lhd_{z,c}) \leq
\tilde{C}^2\lhd_{z', c'}
\]
Where $z' = z'(z,c)$ and $c' = c'(z,c)$.
Next, we show that triangles determine everything, in the general setting of $Cvx_0(\R^n)$. To this end, we will need a definition of triangle in $\R^n$. Given a vector $z$ and a gradient $c$, define:
\[
\lhd_{z, c}(x) = \max \{ c\frac{x}{|z|}, 1_{[0,z]}^\infty \}.
\]
\begin{Lemma}
Assume that $T$ is an almost order preserving map defined on $Cvx_0(\R^n)$, and that there exists $B \in GL(n)$ and a constant $\beta$ such that
\begin{equation}
\tilde{c}\beta \lhd_{B^{-1}z, d} \leq T(\lhd_{z,d}) \leq \tilde{C}\beta \lhd_{B^{-1}z, d}.
\label{I_Triangle_Inequality}
\end{equation}
Then, $T$ is almost a variation of identity, in the following sense:
\[
\frac{1}{C}\varphi(Bx) \leq (T\varphi)(x) \leq C\varphi(Bx)
\]
\label{I_Triangle_Lemma}
\end{Lemma}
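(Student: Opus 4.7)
The plan is to reduce the general case to the triangle case by representing an arbitrary $\varphi\in Cvx_0(\R^n)$ as a modified infimum of triangles that dominate it, and then to push the triangle hypothesis (\ref{I_Triangle_Inequality}) through this representation using the $\hat{\inf}$-stability recorded in Lemma \ref{InfSupLemma}.

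The first step is to establish the extremal representation
\[
\varphi \;=\; \hat{\inf}\bigl\{\lhd_{z,d}:\lhd_{z,d}\geq \varphi\bigr\}.
\]
The inequality $\varphi\leq\hat{\inf}$ is immediate, since $\varphi$ is convex geometric and lies below every member of the family. For the converse I would exhibit, at each $x$ with $\varphi(x)<\infty$, a dominating triangle that attains $\varphi(x)$: the natural candidate $\lhd_{x,\varphi(x)}$ works, since on its supporting segment $[0,x]$ one has $\lhd_{x,\varphi(x)}(tx)=t\varphi(x)\geq\varphi(tx)$ by convexity and $\varphi(0)=0$, while off the ray $\R^+x$ the triangle is $+\infty$. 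Hence the ordinary infimum of the family lies pointwise below $\varphi$, and since $\hat{\inf}\leq \inf$ the same holds for $\hat{\inf}$.

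The next step records two elementary facts that let me transport everything through $B$. The parametric identity $\lhd_{B^{-1}z,d}(x)=\lhd_{z,d}(Bx)$ holds because a triangle is affinely parameterized by a scalar $t\in[0,1]$ along its supporting segment, with value $td$, independently of the Euclidean norm; and $\hat{\inf}$ commutes with precomposition by $B\in GL(n)$, because such precomposition is an order-preserving bijection of $Cvx_0(\R^n)$. Enumerating the dominating family as $\{\lhd_{z_\alpha,d_\alpha}\}$, these two facts combine to give $\hat{\inf}_\alpha\lhd_{B^{-1}z_\alpha,d_\alpha}=\varphi\circ B$.

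Finally I assemble the two bounds. The upper half of (\ref{I_Triangle_Inequality}), together with monotonicity and positive homogeneity of $\hat{\inf}$, gives $\hat{\inf}_\alpha T\lhd_{z_\alpha,d_\alpha}\leq \tilde C\beta\,(\varphi\circ B)$; combining with the left inequality of (\ref{inf_stab}), namely $\tilde c\,T\varphi\leq\hat{\inf}_\alpha T\lhd_{z_\alpha,d_\alpha}$, yields $T\varphi(x)\leq \tilde C^2\beta\,\varphi(Bx)$. Symmetrically, the lower half $T\lhd_{z_\alpha,d_\alpha}\geq\tilde c\beta\,\lhd_{B^{-1}z_\alpha,d_\alpha}$ together with the right inequality of (\ref{inf_stab}), namely $\hat{\inf}_\alpha T\lhd_{z_\alpha,d_\alpha}\leq \tilde C^2\,T\varphi$, gives $T\varphi(x)\geq \tilde c^3\beta\,\varphi(Bx)$. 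Setting $C:=\max(\tilde C^2\beta,\tilde C^3/\beta)$ completes the argument. The only genuinely non-routine step is the extremal representation in the second paragraph; everything else is bookkeeping built on Lemma \ref{InfSupLemma}.
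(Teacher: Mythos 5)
Your proposal is correct and follows essentially the same route as the paper: the paper likewise writes $\varphi = \hat{\inf}_y \lhd_{y,\varphi(y)}$, pushes this through $T$ via Lemma \ref{InfSupLemma}, and uses the change of variables $\hat{\inf}_y \lhd_{B^{-1}y,\varphi(y)} = \varphi\circ B$ exactly as you do. Your indexing over all dominating triangles rather than just the touching family $\{\lhd_{y,\varphi(y)}\}$ is an immaterial variation, and your explicit verification of the extremal representation and of the identity $\lhd_{B^{-1}z,d}(x)=\lhd_{z,d}(Bx)$ only makes the argument more complete than the paper's.
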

\begin{proof} Any $\varphi(x) \in Cvx_0(\R^n)$ can be written as $\varphi(x) = (\hat{\inf_y}\lhd_{y,\varphi(y)})(x)$. Hence, using (\ref{InfSupLemma}) we have $(T\varphi)(x) \leq \tilde{C}^2(\hat{\inf_y}T(\lhd_{y, \varphi(y)})(x)$. Using assumption (\ref{I_Triangle_Inequality}) we conclude that
\[
(T\varphi)(x) \leq \tilde{C}^3\beta (\hat{\inf_y}(\lhd_{B^{-1}y, \varphi(y)}))(x) = \tilde{C}^3\beta (\hat{\inf_z}\lhd_{z, \varphi(Bz)})(x) = \tilde{C}^3\beta\varphi(Bx).
\]
The other part is concluded in a similar way.\\
\end{proof}
\begin{Lemma}
Assume that $T$ is an almost order preserving map defined on $Cvx_0(\R^n)$, and that there exist $B \in GL(n)$ and a constant $\beta$ such that
\begin{equation}
\tilde{c}\beta \lhd_{\frac{B^{-1}z}{d|z|}, \frac{1}{|z|}} \leq T(\lhd_{z,d}) \leq \tilde{C}\beta \lhd_{\frac{B^{-1}z}{d|z|}, \frac{1}{|z|}}.
\label{J_Triangle_Inequality}
\end{equation}
Then, $T$ is almost a variation of $\J$, in the following sense:
\[
\frac{1}{C}(\J\varphi)(Bx) \leq (T\varphi)(x) \leq C(\J\varphi)(Bx)
\]
\label{J_Triangle_Lemma}
\end{Lemma}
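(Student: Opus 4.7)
The plan is to mirror the proof of Lemma \ref{I_Triangle_Lemma}: write any $\varphi\in Cvx_0(\R^n)$ as a $\hat{\inf}$ of triangles, propagate this through $T$ via Lemma \ref{InfSupLemma}, apply the hypothesis to every transformed triangle, and finally recognize the envelope of the resulting triangle family as $(\J\varphi)(Bx)$.

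Concretely, from the canonical representation $\varphi(x) = \hat{\inf}_y \lhd_{y,\varphi(y)}(x)$, Lemma \ref{InfSupLemma} yields $(T\varphi)(x) \le \tilde{C}^{2}\,\hat{\inf}_y\,T(\lhd_{y,\varphi(y)})(x)$. Hypothesis (\ref{J_Triangle_Inequality}) applied with $z = y$ and $d = \varphi(y)$ then gives
\[
(T\varphi)(x) \le \tilde{C}^{3}\beta\,\hat{\inf}_y\,\lhd_{\frac{B^{-1}y}{\varphi(y)|y|},\frac{1}{|y|}}(x),
\]
and the matching lower bound on $(T\varphi)(x)$ in terms of the same modified infimum (with $\tilde{c}^{3}\beta$ in place of $\tilde{C}^{3}\beta$) is produced by combining the lower half of (\ref{J_Triangle_Inequality}) with the other direction of Lemma \ref{InfSupLemma}.

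It remains to identify the modified infimum on the right-hand side with $(\J\varphi)(Bx)$, up to constants depending only on $\tilde{C}$. The natural first step is the identity $(\J\varphi)(Bx) = \J(\varphi\circ B)(x)$, which follows from the explicit formula for the Gauge transform by the substitution $y\mapsto Bw$ inside the infimum. Setting $\psi := \varphi \circ B$ and performing the same substitution $y = Bw$ inside the triangle family, one is reduced to showing that $\hat{\inf}_w\,\lhd_{\frac{w}{\psi(w)|Bw|},\frac{1}{|Bw|}}(x)$ coincides, up to constants, with the canonical gauge representation $\J\psi(x) = \hat{\inf}_w\,\lhd_{w/\psi(w),\,1/\psi(w)}(x)$. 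This step plays the role that the representation $\hat{\inf}_z\,\lhd_{z,\psi(z)}(x) = \psi(x)$ played in the proof of Lemma \ref{I_Triangle_Lemma}.

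The main obstacle is precisely this last matching: the two triangle families carry different normalizations (with $|Bw|$ versus $\psi(w)$ appearing in both tip position and height), so the identification is not a pointwise coincidence of triangles but must be argued at the level of the convex envelopes generated by the two families. Once this is carried out, the sandwich bounds above yield $\tfrac{1}{C}(\J\varphi)(Bx) \le (T\varphi)(x) \le C(\J\varphi)(Bx)$ with $C$ a fixed power of $\tilde{C}$ times $\beta$, completing the proof.
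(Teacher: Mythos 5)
Your first half is fine and is exactly the route the paper intends (its entire proof is the sentence ``similar to Lemma \ref{I_Triangle_Lemma}''): represent $\varphi=\hat{\inf}_y\lhd_{y,\varphi(y)}$, push through $T$ with Lemma \ref{InfSupLemma}, and apply (\ref{J_Triangle_Inequality}) to each triangle. The problem is that the only step in which this lemma actually differs from Lemma \ref{I_Triangle_Lemma} --- identifying $\hat{\inf}_y\,\lhd_{\frac{B^{-1}y}{\varphi(y)|y|},\frac{1}{|y|}}(x)$ with $(\J\varphi)(Bx)$ --- is precisely the step you declare to be ``the main obstacle'' and do not carry out. In Lemma \ref{I_Triangle_Lemma} the analogous identification is the trivial identity $\hat{\inf}_z\lhd_{z,\varphi(Bz)}=\varphi\circ B$; here it is the whole content of the lemma, so leaving it open means the proof is not complete.

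Moreover, the route you sketch for closing it is not the right one. After substituting $y=Bw$ and setting $\psi=\varphi\circ B$, you propose to compare the family $\lhd_{\frac{w}{\psi(w)|Bw|},\frac{1}{|Bw|}}$ with the family $\lhd_{w/\psi(w),1/\psi(w)}$ ``up to constants.'' A term-by-term comparison cannot work: the two parametrizations differ by the factor $|Bw|/\psi(w)$, which is unbounded above and below as $w$ ranges over $\R^n$, so there is no uniform constant relating corresponding triangles, and arguing ``at the level of envelopes'' between two genuinely different families is exactly the difficulty you would need to resolve. The clean way to finish is to avoid this detour entirely: observe that the triangle appearing in hypothesis (\ref{J_Triangle_Inequality}) is (up to a constant depending only on $B$, coming from replacing $|B^{-1}z|$ by $|z|$ in the slope) exactly $\J(\lhd_{z,d})\circ B$, i.e.\ the hypothesis says that $T$ acts on triangles like $\beta\,\J(\cdot)\circ B$. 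Then use that $\J$ is an order-preserving bijection of $Cvx_0(\R^n)$ and therefore commutes with $\hat{\inf}$ \emph{exactly} (as does composition with $B$); this is the same fact the paper already invokes in the one-dimensional argument, where it writes $\hat{\inf}\,\J(\lhd_{y,f(y)/\beta_1})=\J(\hat{\inf}\,\lhd_{y,f(y)/\beta_1})$. With it one gets
\[
\hat{\inf}_y\,\bigl(\J\lhd_{y,\varphi(y)}\bigr)(Bx)=\J\Bigl(\hat{\inf}_y\,\lhd_{y,\varphi(y)}\Bigr)(Bx)=(\J\varphi)(Bx),
\]
and the sandwich closes with a constant of the form $\tilde{C}^{3}\beta$ times a factor depending on $B$. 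So: state and verify the formula for $\J$ on a triangle, state that $\J$ preserves $\hat{\inf}$, and the lemma follows; without these two ingredients the argument has a hole exactly where the lemma's content lies.
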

The proof is similar to lemma \ref{I_Triangle_Lemma}.

\subsection{Proof of Theorem (\ref{Stability_Thm})}
\subsubsection{The Case of "J"}
We stay with the notations of lemma \ref{Order_Preserving_On_P}. First we deal with the case we identify with $\J$, i.e linear functions are mapped to indicators and vice versa. Define $g = \hat{\inf}\{1_{[0,tz]}^{\infty}, l_{a/(1-t)}\}$, for some $0 < t < 1$. Now, $g \leq \lhd_{z,a}$, but it does not hold for any $a' < a$ or $z' > z$. Applying $T$ to the last inequality we get that $T(g) \leq \tilde{C}T(\lhd_{z,a})$. Using property (\ref{inf_stab}):
\begin{equation}
\tilde{c}^2\hat{\inf}(1_{[0,c(a/(1-t)]}^{\infty}, l_{\phi(tz)}) \leq T(g) \leq \tilde{C}\hat{\inf}(1_{[0,c(a/(1-t)]}^{\infty}, l_{\phi(tz)}),
 \label{inf_triangle_bounds}
\end{equation}
and using (\ref{sup_stab}) for the triangle:
\begin{equation}
\tilde{c}\lhd_{c(a), \phi(z)} \leq T(\lhd_{z,a}) \leq \tilde{C}^2\lhd_{c(a), \phi(z)}.
\label{sup_triangle_bounds}
\end{equation}
Plugging (\ref{inf_triangle_bounds}) and (\ref{sup_triangle_bounds}) into our inequality, we conclude that
\begin{equation}
\tilde{c}^2\hat{\inf}(1_{[0,c(a/(1-t)]}^{\infty}, l_{\phi(tz)}) \leq \tilde{C}^2\lhd_{c(a), \phi(z)},
\end{equation}
which means that
\begin{equation}
(c(a) - c(a/(1-t)))\phi(tz) \leq \tilde{C}^4c(a)\phi(z). \label{HS_Left}
\end{equation}
We know that $g \nleq \lhd_{z,a'}$ for $a' < a$. This means that $T(g) \nleq \tilde{c}T(\lhd_{z,a'})$. Using the right-hand side of (\ref{inf_triangle_bounds}) and the left hand side of (\ref{sup_triangle_bounds}) we conclude
\begin{equation}
\tilde{c}^3c(a')\phi(z) \leq (c(a) - c(a/(1-t)))\phi(tz) \label{HS_Right}.
\end{equation} This is true for every $a' < a$, so using continuity, we may right the above with $a$ instead.
Inequalities (\ref{HS_Left}) and (\ref{HS_Right}) can be rewritten together:
\begin{equation}
\tilde{c}^3\frac{c(a)}{c(a)-c(a/(1-t))} \leq \frac{\phi(tz)}{\phi(z)} \leq \tilde{C}^4\frac{c(a)}{c(a)-c(a/(1-t))}. \label{temp1}
\end{equation}
Choose $z=1$ to get:
\begin{equation}
\tilde{c}^3\frac{c(a)}{c(a)-c(a/(1-t))} \leq \frac{\phi(t)}{\phi(1)} \leq \tilde{C}^4\frac{c(a)}{c(a)-c(a/(1-t))}. \label{temp2}
\end{equation}
Combining (\ref{temp1}) and (\ref{temp2}), we come to the inequality
\begin{equation}
c\phi(t)\phi(z) \leq \phi(tz) \leq C\phi(t)\phi(z), \label{PHI_SOLVE}
\end{equation}
for some constant $C$ and $c = C^{-1}$. To solve this, substitue $t = \exp{\alpha_1}$ and $z=\exp{\alpha_2}$, and define $h(s) = \log (\phi(e^s))$. Then after applying $\log$, (\ref{PHI_SOLVE}) becomes:
\begin{equation}
-C' + h(\alpha_1) + h(\alpha_2) \leq h(\alpha_1 + \alpha_2) \leq C' + h(\alpha_1) + h(\alpha_2),
\end{equation} or equivalently:
\begin{equation}
|h(\alpha_1 + \alpha_2) - h(\alpha_1) - h(\alpha_2)| \leq C'.
\end{equation}
The Hyers-Ulam theorem (\ref{Hyers_Ulam}), implies that there exists a linear $g(\alpha) = \gamma\alpha$, such that $|h(\alpha) - g(\alpha)| < C'$. This means that $|\log(\phi(e^s)) - \log(e^{\gamma s})|$ and it is easy to check that this implies the following on $\phi$:
\begin{equation}
c''z^{\gamma} \leq \phi(z) \leq C''z^{\gamma}, \qquad c'' = \frac{1}{C''}
\end{equation}
Notice that $\gamma < 0$, since we know that $\phi$ is decreasing.
Let use now proceed to estimate $c(a)$.  Notice that all the arguments applied so far can be reused for $T^{-1}$, hence there exists $\gamma' < 0$ and $c'z^{\gamma'} \leq \psi(z) \leq C'z^{\gamma'}$ such that $T^{-1}1_{[0,z]}^{\infty}  = l_{\psi(z)}$. We know that $Tl_a = 1_{[0, c(a)]}^{\infty}$, or $T^{-1}1_{[0, c(a)]}^{\infty} = l_a$, which is equivalent. But we have just shown that $T^{-1}1_{[0, c(a)]}^{\infty} = l_{\psi(c(a))}$. So,
\begin{equation}
c'(c(a))^{\gamma'} \leq a = \psi(c(a)) \leq C'(c(a))^{\gamma'}.
\label{J_ca_inequality}
\end{equation}
Rewriting (\ref{J_ca_inequality}) gives
\begin{equation}
c'a^{1/\gamma'} \leq c(a) \leq C'a^{1/\gamma'}.
\label{J_ca_bounds}
\end{equation}
Using (\ref{HS_Left}), (\ref{HS_Right}) and the estimate we have for $\phi(z)$, we write:
\begin{equation}
c''^2\tilde{c}^3 t^{-\gamma} \leq  \tilde{c}^3\frac{\phi(z)}{\phi(tz)} \leq  \frac{c(a)-c(a/(1-t))}{c(a)} \leq \tilde{C}^4\frac{\phi(z)}{\phi(tz)} \leq C''^2\tilde{C}^4t^{-\gamma},
\end{equation}
which is equivalent to
\begin{equation}
1 - C''' t^{-\gamma} \leq \frac{c(a/(1-t))}{c(a)} \leq 1 - c''' t^{-\gamma}. \label{C_SOLVE}
\end{equation}
Choose $a=1$ to get the following:
\begin{equation}
1 - C''' t^{-\gamma} \leq \frac{c(1/(1-t))}{c(1)} \leq 1- c''' t^{-\gamma}. \label{C_SOLVE}
\end{equation}
Using the bounds we got in (\ref{J_ca_bounds}), we see that $\gamma = \gamma' = -1$.
To summarize, there exist positive constants $\alpha_1$, $\alpha_2$ and $\beta_1$, $\beta_2$, such that
\begin{equation}
\frac{\alpha_1}{z} \leq \phi(z) \leq \frac{\alpha_2}{z}, \qquad \frac{\beta_1}{a} \leq c(a) \leq \frac{\beta_2}{a}
\end{equation}
To conclude the proof, notice that we have:
\begin{equation}
T(\lhd_{z,a}) \leq \tilde{C}^2\lhd_{\beta_1/a, \alpha_2/z} =
\tilde{C}^2\J (\lhd_{z/\alpha_2, a/\beta_1}).
\end{equation}
Also, notice that $f(x) = (\hat{\inf}_y \lhd_{y,f(y)})(x)$, so
\begin{eqnarray*}
(Tf)(x) \leq \tilde{C}\hat{\inf}T(\lhd_{y,f(y)}) &\leq& \tilde{C}^3
\hat{\inf}(\lhd_{\beta_1/f(y), \alpha_2/y})  \\
=\alpha_2\tilde{C}^3\hat{\inf}\J (\lhd_{y, f(y)/\beta_1}) &=&
\alpha_2\tilde{C}^3\J (\hat{\inf}\lhd_{y, f(y)/\beta_1}) =
\tilde{C}^3\alpha_2\J (\frac{1}{\beta_1}f(x)) = C'\J(f(x/\beta_1).
\end{eqnarray*}
The same applies for the lower bound.

\subsubsection{The Case of "I"}
In the case $T$ maps indicators to themselves, we do not know that it preserves property $P$. Assume now that $T1_{[0,z]}^{\infty} = 1_{[0,\phi(z)]}^{\infty}$. We also know, due to lemma \ref{TildeP_Bounds}, that if $T(l_a) = f$, then $f'(0)x \leq f(x) \leq \tilde{C}^3f'(0)x$. If we define $c(a) = f'(0)$, then, $l_{c(a)} \leq T(l_a) \leq \tilde{C^3}l_{c(a)}$. Now, we can estimate how triangles are mapped:
\begin{equation}
T(\lhd_{z,a}) \leq \tilde{C}^2\max(1_{[0, \phi(z)]}^{\infty}, T(l_a)) \leq \tilde{C}^5 \lhd_{\phi(z), c(a)},
\end{equation} and
\begin{equation}
T(\lhd_{z,a}) \geq \tilde{c}\max(1_{[0, \phi(z)]}^{\infty}, T(l_a)) \geq \tilde{c} \lhd_{\phi(z), c(a)}.
\end{equation}
Now, let us rewrite the bounds for $g$, from the previous case:
\begin{equation}
T(g) \leq \tilde{C}\hat{\inf}(1_{[0, \phi(z)]}^{\infty}, T(l_{a/(1-t)})) \leq \tilde{C}^4 \hat{\inf}(1_{[0, \phi(z)]}^{\infty}, l_{c(a/(1-t)}),
\end{equation}
and
\begin{equation}
T(g) \geq \tilde{c}^2 \hat{\inf}(1_{[0, \phi(z)]}^{\infty}, l_{c(a/(1-t)}).
\end{equation}
Using the fact that $T(g) \leq \tilde{C}T(\lhd_{z,a})$, we get a similar inequality:
\begin{equation}
(\phi(z) - \phi(tz))c(a/(1-t)) \leq \tilde{C}^7\phi(z)c(a),
\end{equation}
and using the same methods as before (this time taking $z' > z$, since we only know that $\phi$ is continuous), we get the lower bound:
\begin{equation}
(\phi(z) - \phi(tz))c(a/(1-t)) \geq \tilde{c}^6\phi(z)c(a).
\end{equation}
We can see that we have the same inequalities we had in the previous case, but with $\phi$ and $c$ interchanged, and we come to the inequality
\begin{equation}
\frac{1}{C_1}c(1/(1-t)) \leq \frac{c(a/(1-t))}{c(a)} \leq C_1c(1/(1-t)).
\end{equation}
After substituting $s = 1/(1-t)$, we come to an inequality we already know how to solve:
\begin{equation}
c_1c(s)c(a) \leq c(as) \leq C_1c(s)c(a).
\end{equation}
Using Hyers-Ulam thorem again (\ref{Hyers_Ulam}), we conclude again that
$ct^{\gamma} \leq c(t) \leq Ct^{\gamma}$, for some $\gamma$. To find
bounds for $\phi$, substitute this in the original inequality, and
conclude that $\alpha_1z \leq \phi(z) \leq \alpha_2z$. After we have
this estimate it is easy to conclude that $\gamma = 1$. \\To
conclude the proof, notice that we have:
\begin{equation}
T(\lhd_{z,a}) \leq \tilde{C}^5\lhd_{\alpha_1z, \beta_2a}
\end{equation}
Also, notice that $f(x) = (\hat{\inf}_y \lhd_{y,f(y)})$(x), so
\begin{eqnarray*}
(Tf)(x) \leq \tilde{C}\hat{\inf}T(\lhd_{y,f(y)}) &\leq& \tilde{C}^6
\hat{\inf}\lhd_{\alpha_1y, \beta_2f(y)} =
\tilde{C}^6\beta_2f(x/\alpha_1).
\end{eqnarray*}
The same applies for the lower bound.

\subsection{Completing The Proof in $\R^n$}
Recall, that we know there exists a function $\Phi:S^{n-1}\rightarrow S^{n-1}$ (1-1 and onto), such that any function supported on $\R^+y$ is mapped to a function supported on $\R^+\Phi(y)$.
Let us define for each $y \in S^{n-1}$ a number $j(y)$ that equals $0$ if $T$, restricted to $\R^+y$ behaves like the identity (indicators are mapped to indicators) and $1$ if $T$, restricted to $\R^+y$ behaves like $\J$ (indicators are mapped to linear functions and vice versa).

\begin{Lemma}
Denote $S_0 = \{y \in S^{n-1}:j(y)=0\}$, $S_1 = \{y \in S^{n-1}: j(y)=1\}$.
Then, either $S_0=S^{n-1}$, or $S_1=S^{n-1}$.
\end{Lemma}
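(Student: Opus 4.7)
My plan is to argue by contradiction: suppose both $S_0$ and $S_1$ are non-empty, and fix distinct $y_0 \in S_0$, $y_1 \in S_1$ (possible since $n\geq 2$). Writing $z_i = \Phi(y_i)$, the ray-wise classification from the 1D case (applied via Lemma~\ref{Raywiseness} and the preceding subsection) says that on $\R^+y_0$ the map $T$ sends indicators $1^\infty_{[0, s y_0]}$ to indicators on $\R^+z_0$ (bounded support) and linear functions $l_a^{y_0}$ to almost linear functions on the full ray, while on $\R^+y_1$ the roles are reversed. The key qualitative invariant is the dichotomy between bounded support and full-ray support of the image on a given ray; the multiplicative slack $\tilde C$ in the almost-order-preservation cannot convert between these two regimes.

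The bulk of the proof then consists in building one function $f \in Cvx_0(\R^n)$ whose image is forced, by Lemma~\ref{InfSupLemma} and the ray-wise formulas above, to exhibit contradictory (bounded vs.\ full-ray) support on the same ray $\R^+z_0$ or $\R^+z_1$. Concretely, I would consider the conic function $\ell(x) = \|x\|$ and the indicator $1^\infty_B$ of a convex body $B\ni 0$, each of which restricts to an extremal object on every ray. Applying Lemma~\ref{lem-SupportOnRay} to $\ell$ shows $(\supp T\ell) \cap \R^+z_0$ is the full ray while $(\supp T\ell)\cap \R^+z_1$ is bounded; applying it to $1^\infty_B$ gives the swapped behavior. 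Next I would relate these two functions by taking a suitable max and hat-inf, for instance $\max(\ell, 1^\infty_B)$ (which is $\ell$ on $B$ and $\infty$ off $B$) and $\hat{\inf}(\ell, 1^\infty_B)$, applying Lemma~\ref{InfSupLemma} to each, and reading off the support of the resulting image on $\R^+z_0$ and $\R^+z_1$ in two different ways. Since the same convex image function cannot have full-ray support and bounded support on the same ray (even up to the factor $\tilde C$), this yields the contradiction.

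The hardest part is arranging the comparison so that the contradiction is not absorbed by the multiplicative $\tilde C$-slack. Tracking supports (a qualitative object) rather than quantitative values insulates the argument from this slack: once the ``bounded on ray'' vs.\ ``full ray'' dichotomy is forced on the same image ray starting from the same source function, the contradiction stands and is insensitive to the constants. The main bookkeeping cost lies in the nested applications of Lemma~\ref{InfSupLemma} and verifying that the chosen source comparisons genuinely pin down both behaviors on the image side.
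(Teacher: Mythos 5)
Your guiding intuition --- that the bounded-support versus full-ray-support dichotomy is the right $\tilde C$-insensitive invariant --- is exactly the one the paper exploits, but the mechanism you propose for turning it into a contradiction does not close. The gap is that you only extract information on the two individual rays $\R^+z_0$ and $\R^+z_1$, and a convex set in $\R^n$ can perfectly well contain one full ray through the origin while meeting another such ray in a bounded segment (e.g.\ $\{x_1\ge 0,\ |x_2|\le 1\}$ in the plane). So ``$supp(T\ell)$ is the full ray on $\R^+z_0$ and bounded on $\R^+z_1$'' is a consistent description of a single convex function, and the swapped statement for $T1_B^\infty$ concerns a \emph{different} image function; there is no tension yet. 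The combinations you suggest do not create the tension either: $\max(\ell,1_B^\infty)$ restricts on every ray to a triangle $\lhd_{y,1}$, and triangles are sent to (approximate) triangles under both the identity-like and the $\J$-like one-dimensional behaviour, so its image has bounded support on every image ray regardless of $j$; dually, $\hat{\inf}(\ell,1_B^\infty)$ and its image are finite on every ray in either case. In no instance are you forced to see full-ray and bounded support of the \emph{same} image function on the \emph{same} ray.

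The paper supplies the missing global step. It first shows that $S_0$, $S_1$, $\Phi(S_0)$, $\Phi(S_1)$ are convex subsets of the sphere --- precisely because $supp(T(1_B^\infty))$ and $supp(T(|\cdot|))$ are convex and, by Lemma \ref{lem-SupportOnRay}, contain exactly the rays $\R^+\Phi(y)$ with $y\in S_1$, respectively $y\in S_0$. Hence if both sets are nonempty they are complementary half-spheres, and $\Phi(S_1)$ spans a half-space $H_1$. Only now does the support dichotomy bite: using the preimage of the indicator of a bounded convex $K\subset H_0$ one builds a symmetric body $M\vee(-M)$ such that $supp(T(1_{M\vee(-M)}^\infty))$ must contain the entire half-space $H_1$ yet meet $H_0$ in a bounded set, and ``contains a half-space but is bounded on the complementary half-space'' genuinely contradicts convexity of the support, where ``contains one ray but is bounded on another'' does not. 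To repair your argument you would need to first establish this convexity/half-sphere reduction; the purely local, two-ray version cannot succeed.
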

\begin{proof}
Let us see that $S_i, \Phi(S_i)$ are convex. To see this, consider the function $f=1_B^\infty$ where $B$ is the $n$-dimensional unit ball. Let $x\in S_1$. By lemma \ref{lem-SupportOnRay} we know that the support of $Tf$ on $\R^+\Phi(x)$ is the same as the support of $T(\max(f, 1_{\R^+x}^\infty))$, and the latter is $\R^+\Phi(x)$. Since $Tf$ is a convex function with a convex support, we get that for every $x,y\in S_1$, the support of $Tf$ must contain every ray $\R^+\Phi(z)$ such that $\Phi(z)$ is contained in $\Phi(x)\vee\Phi(y)$. Hence, $\Phi(S_1)$ is convex. Choosing $g(x) = |x|$, by the same argument, we get that $\Phi(S_0)$ is also convex. Since $T$ and $T^{-1}$ have the same properties we may conclude that $S_i$ are also convex.\\
Notice that $S_0 \cup S_1 = S^{n-1}$, so either one of the sets is empty and we are done, or $S_0$ and $S_1$ are both half-spheres, and likewise $\Phi(S_i)$. In this case let us check how $T$ acts on the function $f$. Denote by $H_1$ the half-space $\bigcup_{y\in\Phi(S_1)}\R^+y$, and by $H_0$ the half-space $\bigcup_{y\in \Phi(S_0)}\R^+y$. We know that for $y \in S_1$, $\R^+\Phi(y)\subset supp(Tf)$. Hence, $supp(Tf)$ contains $H_1$. This means that the support of $Tf|_{H_0}$ cannot be bounded. But then, we could choose a convex, bounded set $K \subset H_0$ (containing zero in the interior of the boundary) and consider the pre-image of $1_K^\infty$. Since $T^{-1}$ preserves order on indicators we know that the support $M$ of $T^{-1}(1_K^\infty)$ will be contained in the unit ball $B$. Consider the function $h=1_{M \vee (-M)}^\infty$. By the preceding argument we know that $supp(Th)$ contains $H_1$ and $supp(Th)\cap H_0$ is bounded, which is a contradiction to the fact that $supp(Th)$ is convex.
\end{proof}

Now we proceed to analyze the behavior of $T$ in each case.\\

The case of $j\equiv 0$. Define $\varphi:\R^n \rightarrow \R^n$ by $T1_{[0,x]}^{\infty} = 1_{[0, \varphi(x)]}^{\infty}$. We know that $T$ preserves $\sup$ and $\hat{\inf}$ on indicators with equality (compare to lemma \ref{InfSupLemma}), thus for any convex body $K$ with $0 \in K$, $T1_K^\infty=1_{\varphi(K)}^\infty$, and $\varphi(K)$ is also convex. The point map $\varphi$ therefore induces an order preserving isomorphism on ${\cal K}_0^n$, the class of convex bodies containing the origin, and by known results (see \cite{AM5}), this implies $\varphi$ is a linear.\\

Take two triangles $\lhd_1, \lhd_2$ with bases $x, x'$ and heights $a,a'$ accordingly. The largest triangle $\lhd_\lambda$ which is smaller than $\hat{\inf}(\lhd_1, \lhd_2)$ with the base $\lambda x + (1-\lambda)x'$ has the height $\lambda a + (1-\lambda)a'$. Denote by $h(x, a)$ the height of the maximal triangle which bounds $T(\lhd_1)$ from below. We have shown before that $\tilde{C}^3h(x,a)$ will be the height of the triangle that bounds $T(\lhd_1)$ from above. Since $T(\lhd_\lambda) \leq \tilde{C}^2\hat{\inf}(T\lhd_1, T\lhd_2)$, we may write (using lemma \ref{InfSupLemma}):
\[
h(\lambda (x,a) + (1-\lambda)(x', a')) \leq \tilde{C}^7(\lambda h(x,a) + (1-\lambda)h(x', a')).
\]
Notice that for a given $x$, $h$ satisfies conditions of lemma \ref{almost_monotonic_lemma}. This is verified by choosing $a'=1$. Applying this lemma we know that for every $x$ there exists a monotone function $\omega_x(a)$ such that $\tilde{c}\omega_x(a) \leq h(x,a) \leq \omega_x(a)$.
We know that if we increase the height of the triangle $\lhd_\lambda$ by some $\epsilon > 0$ (denote this triangle by $\lhd_\epsilon$), then
$T(\lhd_\epsilon) \nleq \tilde{c}\hat{\inf}(T\lhd_1, T\lhd_2)$. Hence, by lemma \ref{InfSupLemma} combined with properties of $T$, we have
\begin{equation}
h(\lambda (x,a) + (1-\lambda)(x', a') + (0, \epsilon)) \geq \tilde{c}^5(\lambda h(x,a) + (1-\lambda) h(x', a')).
\label{Height_function_lower_est}
\end{equation}
We would like now to say that the  inequality holds when $\epsilon \rightarrow 0$, but we don't know that $h$ is continuous. We do know however, that in the worst case, the right hand side of (\ref{Height_function_lower_est}) is multiplied by $\tilde{C}$ after taking the limit (due to existence of $\omega_x$ which is monotone and continuous). Hence
\[
h(\lambda (x,a) + (1-\lambda)(x', a')) \geq \tilde{c}^6(\lambda h(x,a) + (1-\lambda) h(x', a')).
\]
Applying lemma \ref{Triangle_Height_Function_Lemma} on $h(x,a)$, we conclude that there exists a constant $\beta$ such that
\[
\tilde{c}\beta a \leq h(x,a) \leq \tilde{C}\beta a.
\]
To sum it up, we know that for a triangle $f$, $\tilde{c}\beta f \circ A \leq T(f) \leq \tilde{C}\beta f \circ A$. Using lemma \ref{I_Triangle_Lemma} we conclude the same inequality for every $f \in Cvx_0(\R^n)$. \\

The case of $j\equiv 1$. In this case we know that lines are mapped to indicators and vice-versa. Notice, that we cannot compose $T$ with $\J$ and apply the previous case, since $\J \circ T$ would not necessarily satisfy the conditions of almost order preserving transform. However, we do know that in this case $\J \circ T$ is order preserving on the extremal family of indicators and rays. Thus, as explained above (the case of $j\equiv 0$), $(\J\circ T)1_{[0,z]}^{\infty}=1_{[0, Bz]}^{\infty}$ for some $B \in GL(n)$. Composing both sides with $\J$ (recall that $\J$ is an involution), we see that $T$ sends the indicator $1_{[0,z]}^{\infty}$ to a ray in direction $Bz$. Due to the ray-wise-ness of the problem we may conclude that any function supported on a ray in direction $z$ is mapped to a function supported by the ray $\R^+Bz$ ($\Phi(z) = Bz$). \\
Take two indicators $I_1 = 1_{[0,z]}^{\infty}$ and $I_2 = 1_{[0,z']}^{\infty}$ and define the function $g = \hat{\inf}(I_1, I_2)$. The indicator $I_\lambda=1_{[0, \lambda z +  (1-\lambda) z']}^{\infty}$ is bigger then $g$, but for every $\epsilon > 0$ the indicator $I_\epsilon=1_{[0, (1+\epsilon)(\lambda z +  (1-\lambda) z')]}^{\infty}$ is not comparable to $g$. Since $T$ is order preserving on indicators and rays, it preserves the $\hat{\inf}$, so $\hat{\inf}(TI_1, TI_2) = Tg \leq TI_\lambda$, but the same is not true for any $T_I\epsilon$. Define $\psi(z)$ by the way $T$ maps indicators to lines: $T1_{[0,z]}^{\infty} = l_{Bz/|z|, \psi(z)}$. Hence the ray $TI_\lambda$ is comparable to the sector $Tg$ that is spanned by the rays $TI_1, TI_2$. Since the same is not true for $T_\epsilon$, and $\psi$ is monotone in every direction, we conclude that $Tg$ is a linear combination of $TI_1$, $TI_2$. Using this fact we come to the following property of $\psi$:
\begin{equation}
\psi(\lambda z + (1-\lambda)z') = \frac{\lambda |z|}{|\lambda z + (1-\lambda)z'|}\psi(z) + \frac{(1-\lambda)|z'|}{|\lambda z + (1-\lambda)z'|}\psi(z').
\end{equation}
Define the function $h(z):=|z|\psi(z)$. It follows that $h(z)$ satisfies: $h(\lambda z + (1-\lambda)z') = \lambda h(z) + (1-\lambda)h(z')$, from which it follows that $h$ is linear: $h(z) = <u_0, z> + \beta$ for some vector $u_0 \in \R^n$ and a constant $\beta$. Since $\psi(z)$ cannot be zero, $u_0 = 0$, it means that $\psi(z) = \beta/|z|$. \\
Now, define $\theta(z,a)$ by $Tl_{z,a}=1_{[0, Bz\theta(z,a)/|z|]}^{\infty}$. Finding $\theta(z,a)$ can be accomplished directly as with $\psi$, but it is simpler to notice that $T$ and $T^{-1}$ have the same properties. On one hand we know that $T^{-1}1_{[0, Bz/|z|\theta(z,a)]}^{\infty} = l_{z,a}$. On the other hand, applying the same arguments used for $\psi$, we have $T^{-1}1_{[0, Bz/|z|\theta(z,a)]}^{\infty} = l_{z/|z|, \gamma/\theta(z,a)}$, for some $\gamma > 0$. Thus,
\[
a = \frac{\gamma}{\theta(z,a)},
\]
or equivalently, $\theta(z,a) = \gamma/a$.
Using lemma \ref{J_Triangle_Lemma} we conclude the theorem. \\ \\
To show the dual statement (\ref{MainTheorem_Dual}), apply $\mathcal{A}$ to $T$, and use the homogeneity of $\mathcal{A}$ to conclude that $T$ is almost order preserving.  Now we know that $\mathcal{A}T$ is either almost-$\J$ or almost identity. Applying $\mathcal{A}$ again, and using the fact that it is an involution and that $\mathcal{AJ}=\mathcal{L}$ we finish the proof. \\ \\
\begin{Remark}
In case $n \geq 3$, we could use a shorter proof to see that $\Phi$ is linear. Notice that $\Phi$ sends cones to cones, and preserves intersections and convex-hulls of unions of cones. This is shown easily by using properties of $\sup$ and $\hat{\inf}$ from lemma \ref{InfSupLemma}: Define functions which are zero on the cone and $\infty$ everywhere else. The intersection is given by $\sup$ of the functions, and the convex hull is given by $\hat{\inf}$. Observe that the functions have values of $0$ and $\infty$ only, the inequalities in lemma \ref{InfSupLemma} become equalities, and the property holds.
Using Schneider's theorem \cite{S2008}, we conclude that $\Phi$ is linear. This means that $\Phi(x) = Bx$ for some $B \in GL(n)$. \\ \\

\end{Remark}

\section{Stability On the Class of Non-Negative Convex Functions}
We now proceed to the proof of theorem (\ref{GeneralCvxStability}). Again, like in the previous case, we will need a family of extremal functions and some properties of their behaviour under our transform. The extremal family of function we will use in the case are what we call here "delta" functions $D_\theta + c$, mentioned before.
\subsection{Preservation of $\hat{\sup}$ and $\hat{\inf}$}
Clearly, properties (\ref{sup_stab}) and (\ref{inf_stab}) hold in this case too, and the proof of lemma \ref{InfSupLemma} can be applied verbatim.

\subsection{Behaviour of "Delta" Functions}
\subsubsection{Delta Functions are Mapped to Delta Functions}
We will show that $T$ maps the class of ``delta'' functions $\{D_\theta + c\}$ to itself and does so bijectively . Assume $T(D_\theta + c) = f$. We want to show that the support of $f$ has exactly one point. Assume there exist two functions $g$ and $h$ such that $g \geq f$ and $h \geq f$. Due to surjectivity we may write: $g = \tilde{c}T\varphi$ and $h = \tilde{c}T\psi$. Hence,
\begin{subequations}
\begin{align}
T(D_\theta + c) &\leq \tilde{c}T\varphi \\
T(D_\theta + c) &\leq \tilde{c}T\psi.
\end{align}
\end{subequations}
Condition (\ref{TRevEquiv}b) now implies that $D_\theta + c \leq \varphi$ and $D_\theta + c  \leq \psi$. This means that both $\varphi$ and $\psi$, are of the form $D_\theta + \alpha_i$. Thus, they are comparable, and without loss of generality we may assume that $\varphi > \psi$.  Applying condition (\ref{TDefinition}a), we get that $h \leq \tilde{C}g$. But, if the support of $f$ has two or more points, we can easily find two functions greater than $f$, but not comparable up to $\tilde{C}$. So we conclude that $f$ is supported at one point only, and has the form $D_\theta + c'$.

\subsubsection{Only Delta Functions are Mapped to Delta Functions}
Now assume that $Tf = D_\theta + c$ and that the support of $f$ has at least two points $x_0$ and $x_1$, with values $c_0$ and $c_1$. Then $D_{x_0} + c_0 \geq f$ and $D_{x_1} + c_1 \geq f$. Applying condition (\ref{TDefinition}a), we get $\tilde{C}T(D_{x_i} + c_i) \geq D_\theta + c$.  According to the previous lemma $T(D_{x_i} + c_i) = D_{y_i} + a_i$, but they must be comparable (since they are greater than $D_\theta + c$), so $y_1 = y_2=\theta$. But this also implies that the sources are comparable, up to a constant $\tilde{C}$, hence $x_1 = x_2$.

\subsubsection{Delta Functions are Mapped in Fibres}
Since $D_\theta + c > D_\theta$, we get that $T(D_\theta) \leq \tilde{C}T(D_\theta + c) = D_{\theta'} + c'$. We know that $T(D_\theta) = D_\varphi + \alpha$. So $D_\varphi + \alpha \leq D_{\theta'} + c'$, which means that $\varphi = \alpha$ and $T(D_\theta + c) = T(D_\theta) + c''$. We see that all the delta functions on the fibre $x=\theta$ are mapped to delta functions on the fiber $T(D_\theta)$.

\subsection{The Mapping Rule for $D_\theta+c$}
Assume that the delta functions are mapped by the rule $T(D_\theta + c)=D_{\phi(\theta)} + \psi(\theta, c)$. Notice that due to  the property of mapping in fibres, $\phi$ does not depend on $c$.
Now we analyze the behavior of $\phi(\theta)$.  Take $D_{\theta_0}+c_0$ and $D_{\theta_1}+c_1$, and define $g=\hat{\inf}(D_{\theta_0}+c_0, D_{\theta_1}+c_1)$. Consider $\theta=\lambda \theta_0 + (1-\lambda) \theta_1$, and $c=\lambda c_0 + (1-\lambda) c_1$. Obviously $g \leq D_{\theta}+c$, and this is not true for any $D_{\theta}+c'$ where $c' < c$. Apply property (\ref{TDefinition}a) and use (\ref{inf_stab}) to write:
\[
\tilde{c}^2\hat{\inf}(D_{\phi(\theta_0)}+\psi(\theta_0, c_0), D_{\phi(\theta_1)}+\psi(\theta_1, c_1)) \leq T(g) \leq \tilde{C}T(D_\theta+c)=D_{\phi(\theta)}+\tilde{C}\psi(\theta, c).
\]
This inequality implies that $\phi(\theta)$ is on the interval $[\phi(\theta_0), \phi(\theta_1)]$, since otherwise $T(g)$ and $\tilde{C}T(D_\theta+c)$ would not be comparable. So $\phi:\R^n\rightarrow \R^n$ sends intervals to intervals, hence it must be affine (see \cite{AM5}). Thus, there exists $A \in GL(n)$ and $b \in R^n$ such that $\phi(x)=Ax+b$. \\

We know that according to properties (\ref{TDefinition}a) and (\ref{TDefinition}b), for a given $\theta$, $\psi$ satisfies (\ref{temp1}) and (\ref{temp2}). Applying lemma \ref{almost_monotonic_lemma} we find a monotone function $\omega_{\theta}$ that satisfies the following: \[\tilde{c}\omega_{\theta}(t) \leq \psi(\theta, t) \leq \omega_\theta(t).\]
If $c' < c$,  then $g \nleq D_{\theta}+c$, and $Tg \nleq \tilde{c}(D_{\phi(\theta)}+\psi(c'))$. Hence by property (\ref{inf_stab}) $\tilde{C}\hat{\inf}(D_{\phi(\theta_0)}+\psi(\theta_0, c_0), D_{\phi(\theta_1)}+\psi(\theta_1, 2c_1)) \nleq \tilde{c}^2(D_{\phi(\theta)}+\omega_\theta(c'))$. Since $\omega_\theta$ is monotone and the last statement applies for all $c'<c$, we can conclude that
\[
\psi(\theta, c) \leq \omega_\theta(c) \leq \tilde{C}^3(\lambda(\psi(\theta_0, c_0) + (1-\lambda)\psi(\theta_1, c_1)).
\]
But, on the other hand, we have
\[
\psi(\theta, c) \geq \tilde{c}^3(\lambda(\psi(\theta_0, c_0) + (1-\lambda)\psi(\theta_1, c_1)).
\]
So, we know that for every $(x,c)$ and $(y,d)$ in $\R^n \times\R^+$, $\psi$ satisfies the following:
\begin{equation}
c(\lambda \psi(x,c) + (1-\lambda) \psi(y,d)) \leq \psi(\lambda (x,c) + (1-\lambda)(y,d)) \leq C(\lambda \psi(x,c) + (1-\lambda) \psi(y,d)),
\label{quasi_linearity}
\end{equation}
and $\psi(x,0) = 0$.
\subsection{Proving Stability}
According to lemma \ref{Triangle_Height_Function_Lemma}, we know that there exists a constant such that $\tilde{c}\beta d \leq \psi(\theta, d) \leq \tilde{C}\beta d$. Recall also that there exists $A \in GL(n)$ and a vector $b$, such that $T(D_\theta)=D_{A\theta+b}$. This means that $D_{A\theta+b} + \tilde{c}\beta d \leq T(D_\theta + d) \leq D_{A\theta+b} + \tilde{C}\beta d$. \\
We know that any function $f(x) \in Cvx^+(\R^n)$ can be described by "Delta" functions: $f(x) = (\hat{\inf}(D_y + f(y)))(x)$. So,
\begin{eqnarray*}
(Tf)(x) &=& T(\hat{\inf_y}(D_y + f(y)))(x)  \leq \tilde{C}(\hat{\inf_y}T(D_y+f(y)))(x) \\ \\
&\leq& \tilde{C}(\hat{\inf_y}D_{Ay+b} + C\beta f(y))(x) = \tilde{C}(\beta f(A^{-1}(x-b))
\end{eqnarray*}
The lower bound is obtained in the same way, so we come to:
\[
\tilde{c}\beta f(A^{-1}(x-b)) \leq (Tf)(x) \leq \tilde{C}\beta f(A^{-1}(x-b)),
\]
as required. \\ \\
The authors would like to express their sincere appreciation to Prof. Vitali Milman and Prof. Shiri Artstein-Avidan for their support, advice and discussions.

\end{document}